\numberwithin{equation}{section}
\newtheorem{Theorem}{Theorem}[section]
\newtheorem{Corollary}[Theorem]{Corollary}
\newtheorem{Proposition}[Theorem]{Proposition}
 { \theoremstyle{definition}
\newtheorem{Definition}[Theorem]{Definition}
\newtheorem{Remark}[Theorem]{Remark} }
\def\N{\mathbb N}
\def\1{\mathbf 1}
\tikzset{
  ncone/.pic={
	\draw (0,0)--(0,0.2);
  }
}
\tikzset{
  nctwo/.pic={
    \draw (0,0)--(0,0.2);
	\draw (0.1,0)--(0.1,0.2);
  }
}
\tikzset{
  nctwoW/.pic={
    \draw (0,0.2)--(0,0)--(0.1,0)--(0.1,0.2);
  }
}
\tikzset{
  nctwoWW/.pic={
    \draw (0,0.2)--(0,0)--(0.2,0)--(0.2,0.2);
  }
}
\tikzset{
  ncthreeWW/.pic={
    \draw (0,0.2)--(0,0)--(0.3,0)--(0.3,0.2);
	\draw (0.2,0)--(0.2,0.2);
  }
}
\tikzset{
  ncthree/.pic={
    \draw (0,0)--(0,0.2);
	\draw (0.1,0)--(0.1,0.2);
	\draw (0.2,0)--(0.2,0.2);
  }
}
\tikzset{
  ncthreeW/.pic={
    \draw (0,0.2)--(0,0)--(0.2,0)--(0.2,0.2);
	\draw (0.1,0)--(0.1,0.2);
  }
}
\tikzset{
  ncfour/.pic={
    \draw (0,0)--(0,0.2);
	\draw (0.1,0)--(0.1,0.2);
	\draw (0.2,0)--(0.2,0.2);
	\draw (0.3,0)--(0.3,0.2);
  }
}
\tikzset{
  ncfive/.pic={
    \draw (0,0)--(0,0.2);
	\draw (0.1,0)--(0.1,0.2);
	\draw (0.2,0)--(0.2,0.2);
	\draw (0.3,0)--(0.3,0.2);
	\draw (0.4,0)--(0.4,0.2);
  }
}
\tikzset{
  ncfourW/.pic={
    \draw (0,0.2)--(0,0)--(0.3,0)--(0.3,0.2);
	\draw (0.1,0)--(0.1,0.2);
	\draw (0.2,0)--(0.2,0.2);
  }
}
\tikzset{
  ncfiveW/.pic={
    \draw (0,0.2)--(0,0)--(0.4,0)--(0.4,0.2);
	\draw (0.1,0)--(0.1,0.2);
	\draw (0.2,0)--(0.2,0.2);
	\draw (0.3,0)--(0.3,0.2);
  }
}
\tikzset{
  nconeinsidetwoWW/.pic={
    \path (0,0) pic {nctwoWW}; \path (0.1,0.1) pic {ncone};
  }
}
\tikzset{
  nconeinsidethreeWW/.pic={
    \path (0,0) pic {ncthreeWW}; \path (0.1,0.1) pic {ncone};
  }
}
\tikzset{
  nconeinsidethreerightWW/.pic={
    \draw (0,0.2)--(0,0)--(0.3,0)--(0.3,0.2);
    \draw (0.1,0)--(0.1,0.2); \draw (0.2,0.1)--(0.2,0.3);
  }
}
\tikzset{
  nconeinsidethreeleftWW/.pic={
    \draw (0,0.2)--(0,0)--(0.3,0)--(0.3,0.2);
    \draw (0.2,0)--(0.2,0.2); \draw (0.1,0.1)--(0.1,0.3);
  }
}
\tikzset{
  nctwoWWW/.pic={
    \draw (0,0.2)--(0,0)--(0.3,0)--(0.3,0.2);
  }
}
\tikzset{
  nconeoneinsidetwoWW/.pic={
	\path (0,0) pic {ncone};
    \path (0.1,0) pic {nctwoWW}; 
	\path (0.2,0.1) pic {ncone};
  }
}
\begin{document}

\newcommand{\arXivNumber}{2204.01445}

\renewcommand{\thefootnote}{}

\renewcommand{\PaperNumber}{038}

\FirstPageHeading

\ShortArticleName{Shifted Substitution in Non-Commutative Multivariate Power Series}

\ArticleName{Shifted Substitution in Non-Commutative\\ Multivariate Power Series with a View Toward\\ Free Probability\footnote{This paper is a~contribution to the Special Issue on Non-Commutative Algebra, Probability and Analysis in Action. The~full collection is available at \href{https://www.emis.de/journals/SIGMA/non-commutative-probability.html}{https://www.emis.de/journals/SIGMA/non-commutative-probability.html}}}

\Author{Kurusch~EBRAHIMI-FARD~$^{\rm a}$, Fr\'ed\'eric~PATRAS~$^{\rm b}$, Nikolas~TAPIA~$^{\rm c}$\newline and Lorenzo~ZAMBOTTI~$^{\rm d}$}

\AuthorNameForHeading{K.~Ebrahimi-Fard, F.~Patras, N.~Tapia and L.~Zambotti}

\Address{$^{\rm a)}$~Department of Mathematical Sciences, Norwegian University of Science and Technology,\\
\hphantom{$^{\rm a)}$}~NO 7491 Trondheim, Norway}
\EmailD{\href{mailto:kurusch.ebrahimi-fard@ntnu.no}{kurusch.ebrahimi-fard@ntnu.no}}
\URLaddressD{\url{https://folk.ntnu.no/kurusche/}}

\Address{$^{\rm b)}$~Universit\'e C\^ote d'Azur, CNRS, UMR 7351, Parc Valrose, 06108 Nice Cedex 02, France}
\EmailD{\href{mailto:patras@unice.fr}{patras@unice.fr}}
\URLaddressD{\url{https://math.unice.fr/~patras/}}

\Address{$^{\rm c)}$~Weierstra{\ss}-Institut Berlin and Technische Universit\"at Berlin, Berlin, Germany}
\EmailD{\href{mailto:tapia@wias-berlin.de}{tapia@wias-berlin.de}}
\URLaddressD{\url{http://wias-berlin.de/people/tapia/}}

\Address{$^{\rm d)}$~LPSM, Sorbonne Universit\'e, CNRS, Universit\'e Paris Cit\'e,\\
\hphantom{$^{\rm d)}$}~4 Place Jussieu, 75005 Paris, France}
\EmailD{\href{mailto:zambotti@lpsm.paris}{zambotti@lpsm.paris}}
\URLaddressD{\url{https://www.lpsm.paris/users/zambotti/}}

\ArticleDates{Received April 05, 2022, in final form May 29, 2023; Published online June 08, 2023}

\Abstract{We study a particular group law on formal power series in non-commuting variables induced by their interpretation as linear forms on a suitable graded connected word Hopf algebra. This group law is left-linear and is therefore associated to a pre-Lie structure on formal power series. We study these structures and show how they can be used to recast in a group theoretic form various identities and transformations on formal power series that have been central in the context of non-commutative probability theory, in particular in Voiculescu's theory of free probability.}

\Keywords{non-commutative probability theory; non-commutative power series; moments and cumulants; combinatorial Hopf algebra; pre-Lie algebra}

\Classification{16T05; 16T10; 16T30; 17A30; 46L53; 46L54}

\renewcommand{\thefootnote}{\arabic{footnote}}
\setcounter{footnote}{0}

\section{Introduction}
\label{sec:intro}

This work aims at explicitly relating two approaches to key arguments in Voiculescu's theory of free probability and related areas \cite{mingospeicher_17,speichernica,voiculescu_95}. On the one hand, the common approach by formal power series, on the other hand, a more recent one that relies on group-theoretical and Hopf algebraic arguments. For that purpose, we introduce and study various group, Lie and pre-Lie structures on formal power series in non-commuting indeterminates. We obtain as a by-product a dictionary between the two approaches that allows to translate various Hopf algebraic constructions into (non-trivial and non-standard) operations on formal power series.

Let us be more precise. In recent work (see~\cite{EFPa2019} and references therein), a shuffle group theoretic approach to moment-cumulant and cumulant-cumulant relations in non-commutative probability was proposed. In this setting, various families of cumulants, namely monotone, (conditionally) free, and Boolean, are understood as elements in the Lie algebra $\mathfrak{g}$ of infinitesimal characters over a particular combinatorial word Hopf algebra $H$. The series of moments is identified in turn with a particular element in the group $G$ of Hopf algebra characters on $H$. Three exponential-type maps happen to relate bijectively the group $G$ and its Lie algebra $\mathfrak{g}$; they therefore imply relations between the aforementioned families of cumulants and moments as well as relations amid the different types of cumulants.

On the other hand, it is well-known that the relations between moments and cumulants as well as the relations between the different families of cumulants can be concisely described in terms of multivariate generating functions, i.e., non-commuting formal power series \cite{AHLV2015,speichernica}. It is therefore natural to look for a precise understanding of the connection between formal power series in non-commuting variables with scalar-valued coefficients and the properties of linear forms on the aforementioned Hopf algebra $H$.

The shuffle algebra approach was further developed in \cite{EFPTZ_18, EFPTZ2021} with respect to Wick polynomials. In the later reference, it was shown that free, Boolean, and conditionally free Wick polynomials, introduced and studied in great detail by Anshelevich in a series of papers \cite{Anshelevich_04,Anshelevich_09a,Anshelevich_09b}, can be defined and related through the action of the group $G$ on the identity map in the space of endomorphisms on the tensor algebra $T(\mathcal{A})$ defined over the underlying non-commutative probability space $(\mathcal{A},\varphi)$. The paper \cite{EFPTZ2021} extended to non-commutative probability the setting of the previous work on Wick polynomials by the
same authors \cite{EFPTZ_18}.

In this work, we identify and study a particular group law (resp.~pre-Lie and Lie products as well as other operations and structures) on formal power series in non-commuting variables. It is induced, as we just alluded to, by the interpretation of the latter as linear forms on the Hopf algebra $H$.
This new group law is left-linear\footnote{See Definition~\ref{def:left-lin} below.}. It generalizes to the multivariate case --up to an isomorphism-- the group of tangent-to-identity formal power series that gives rise to the classical Fa\`a di Bruno Hopf algebra. Being left-linear, the group law is therefore associated to a novel pre-Lie structure defined on formal power series. Eventually, we note that our approach resembles in various respects the link between Butcher's group of $B$-series in numerical analysis~\cite{Butcher21} and a Hopf algebra of non-planar rooted trees described by Connes and Kreimer \cite{Brouder00,CEFM2011,CK98}. See Remark~\ref{butcher} below. We study these phenomena and show how they can be used to recast in a group-theoretic form various identities and transformations on formal power series that have shown to be central in the context of non-commutative probability theory, in particular in Voiculescu's theory of free probability \cite{speichernica}.

The paper is organised as follows. In Section~\ref{sec:ncpowerseries}, we describe the new group law on multivariate formal power series. Section~\ref{sec:pre-Lie} is dedicated to the corresponding pre-Lie and Lie structures. Sections~\ref{sec:Hopf} and~\ref{sec:shuffle} make the connections with the Hopf and shuffle algebraic viewpoints and explain how key operations on linear forms on the Hopf algebra $H$ transport to formal power series. The last section describes explicit connections with free, Boolean and monotone probability.

\section{The shifted composition group law}\label{sec:ncpowerseries}

The set of (strictly) positive integers is denoted by $\N$ and all structures are considered over the ground field $\mathbb K$ (of characteristic zero). Let $x=\{x_1,x_2,x_3,\dotsc\}$ be a set of formal non-commutative variables and let $A$ be a commutative $\mathbb K$-algebra with unit $1_A$ (or simply $1$ when no confusion can arise). The set of non-empty finite sequences of positive integers is denoted~$\N^*$. We will use the common word notation for elements in $\N^*$. The empty word is denoted $\mathbf{1}$, by convention it does not belong to $\N^*$. The subset $\N^k \subset \N^*$ contains words $w=i_1 \cdots i_k$ of length $|w|=k$, that is, sequences with exactly $k$ letters.

We consider the ring
\begin{equation*}
	R := A \langle\langle x_1,x_2,x_3,\dotsc\rangle\rangle
\end{equation*}
of non-commutative formal power series in these variables with coefficients in $A$. A typical element $f=f(x) \in R$ has the form
\[
	f(x) = f_0 + \sum_{k=1}^\infty \sum_{(i_1,\dotsc,i_k)\in \N^k}f_{i_1\dotsm i_k} x_{i_1} \dotsm x_{i_k}
\]
with coefficients $f_0, f_{i_1\dotsm i_k} \in A$. It is convenient to use word notation for such a series
\begin{equation*}
	f(x) = \sum_{w \in \N^* \cup \{\mathbf{1}\}} f_{w} x_{w} .
\end{equation*}
Here we have associated to a word $w=i_1 \dotsm i_m \in {\N}^m$ the non-commutative monomial $x_w := x_{i_1}\dotsm x_{i_m}$, with the convention that $x_\mathbf{1}=1_A$ and $f_\mathbf{1}=f_0$.
Multiplication in the ring $R$ is known as Cauchy product, which is denoted for elements $f,g \in R$ by
\begin{equation}
\label{prodGp}
	fg(x) := \sum_{w \in \N^* \cup \{\mathbf{1}\}} (fg)_w x_w.
\end{equation}
The coefficient of the empty word is $f_0g_0$ and $(fg)_w \in A$, for $w \in \N^*$ is defined to be
\[
	(fg)_{i_1\dotsm i_m}
	:= f_{i_1\dotsm i_m}g_0 + f_0g_{i_1\dotsm i_m}
	+\sum_{j=1}^{m-1} f_{i_1 \dotsm i_j} g_{i_{j+1} \dotsm i_m}.
\]
This can be compactly formulated in terms of the deconcatenation coproduct on words in $\N^*$
\begin{equation*}
	(fg)_{i_1\dotsm i_k} = m_A\big(\hat{f} \otimes \hat{g}\big) \delta(i_1\dotsm i_k),
\end{equation*}
where $\delta(\mathbf{1}) := \mathbf{1} \otimes \mathbf{1}$ and
\begin{equation*}
	\delta(i_1\dotsm i_k) = i_1\dotsm i_k \otimes \mathbf{1} +
						\mathbf{1} \otimes i_1\dotsm i_k
						+\sum_{j=1}^{k-1} {i_1\dotsm i_j} \otimes {i_{j+1} \dotsm i_k}.
\end{equation*}
The maps $\hat{f}$ and $\hat{g}$ are defined to be linear on the linear span of elements in $\N^*$ with values in~$A$, i.e., $\hat{f}(w):=f_w$, $\hat{g}(w):=g_w$, and extended to the empty word, $\hat{f}(\mathbf{1})=f_0$ and $\hat{g}(\mathbf{1})=g_0$.

In the following, we consider two distinct subsets of the ring $R$, which will be denoted $G^1$ and $G^0$. The former consists of elements in $R$ with unit constant coefficient
\begin{equation*}
	G^1:=\{f(x) \in R\ |\ f_0=1_A\}.
\end{equation*}
Elements $h \in G^1$ are written $h=1_A + h^{\prime}$. One verifies that $G^1$ forms a group under the usual multiplication \eqref{prodGp}. For $f \in G^1$, the coefficients of its inverse $f^{-1} \in G^1$ can be explicitly computed starting from those of $f(x)$. On the other hand, the set $G^0$ contains elements in $R$ with zero constant coefficient
\begin{equation*}
	G^0:=\{f(x) \in R\ |\ f_0=0\}.
\end{equation*}
We introduce also the set of so-called ``tangent-to-identity'' elements (according to the terminology of dynamical systems)
\begin{equation*}
	G^c:=\{f(x) \in R\ |\ f_0=0,\, f_i=1_A,\, i \in \N \}.
\end{equation*}

On the set $G^1$, we consider a new product by combining composition and the Cauchy product~\eqref{prodGp}. Before giving the definition, we introduce some notation. For $g \in G^1$, a new set of transformed variables
\begin{equation*}
	xg(x):=\{{(xg(x))}_1,{(xg(x))}_2,{(xg(x))}_3,\dotsc\}
\end{equation*}
is defined for $ i \in \N$ by
\begin{align*}
%\label{ztransfrom1}
	(xg(x))_i := x_ig(x)
	= \sum_{w \in\N^* \cup \{ \mathbf{1} \} } g_{w} x_{iw}
	= x_i + \sum_{w \in\N^*} g_{w} x_ix_w.
\end{align*}
For words $w=i_1 \cdots i_l \in \N^*$, we set
\begin{equation*}
%\label{ztransfrom2}
	(xg(x))_{i_1 \cdots i_l} := x_{i_1}g(x) \cdots x_{i_l}g(x).
\end{equation*}
\begin{Definition}
For $f,g \in G^1$, the {\it shifted composition} group law is defined:
\begin{equation}
\label{monoprod}
	 (f \bullet g)(x):=q g(x)f(xg(x)).
\end{equation}
\end{Definition}

\begin{Remark}
Later, it will become clear that from the shuffle algebra viewpoint on non-commutative probability, shifted composition corresponds to {\it monotone composition}.
\end{Remark}

Observe that, since $g \in G^1$, we have $x_ig \in G^c \subset G^0$ for all $i>0$. The composition $f(xg(x))=1_A + f'(xg(x))$ is therefore again an element in $G^1$. In the one-dimensional case (corresponding to a single variable $x=x_1$), the set $G^c$, whose elements are then called tangent-to-identity formal diffeomorphisms (as they are formal diffeomorphisms of the one-dimensional line), is equipped with a group law by the composition of univariate formal power series. One has furthermore the linear isomorphism $G^1 \cong G^c$ given by $\mu\colon f \longmapsto xf$. We therefore get
\begin{equation*}
	\mu(f \bullet g)(x)
	=xg(x)f(xg(x))
	=\mu(f)(\mu(g)(x)).
\end{equation*}

\begin{Corollary}
In the one-dimensional case, $G^1$ is isomorphic to the group of tangent-to-identity formal diffeomorphisms.
\end{Corollary}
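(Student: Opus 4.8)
The plan is to realize the linear isomorphism $\mu\colon f\mapsto xf$ already introduced above as a group isomorphism, transporting the shifted composition $\bullet$ on $G^1$ to ordinary composition on $G^c$. First I would pin down, in the one-variable setting $x=x_1$, that $\mu$ restricts to a bijection $G^1\to G^c$. For $f=1_A+f'\in G^1$ the series $\mu(f)=xf=x+xf'$ has vanishing constant term and coefficient $1_A$ on $x$ (since $f'$ has no constant term), hence $\mu(f)\in G^c$; conversely, any $g\in G^c$ has the form $g=x+\sum_{k\ge 2}g_{k}x^{k}$, so that $g/x=1_A+\sum_{k\ge 2}g_{k}x^{k-1}\in G^1$ is a well-defined two-sided inverse, giving $\mu^{-1}(g)=g/x$.

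Next I would recall that $(G^c,\circ)$, the tangent-to-identity formal diffeomorphisms of the line under substitution $g\circ h:=g(h(x))$, is a group: substitution is associative and admits $\id(x)=x$ as unit, while the tangent-to-identity condition (coefficient $1_A$ on the linear term, hence invertible in $A$) is exactly what guarantees the existence of a compositional inverse order by order. This is the classical group whose graded dual is the Fa\`a di Bruno Hopf algebra. The decisive input is the identity computed just above, namely $\mu(f\bullet g)(x)=xg(x)f(xg(x))=\mu(f)(\mu(g)(x))$, which says precisely that $\mu(f\bullet g)=\mu(f)\circ\mu(g)$.

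Finally, I would assemble these facts. Since $\mu$ is a bijection intertwining $\bullet$ and $\circ$, the group structure of $(G^c,\circ)$ transports back: $\bullet$ is associative, its unit is $1_A=\mu^{-1}(\id)$, and the $\bullet$-inverse of $f$ is $\mu^{-1}\big((\mu(f))^{\circ-1}\big)$. Hence $(G^1,\bullet)$ is a group and $\mu$ is a group isomorphism, proving the claim. The only points demanding care are formal well-definedness: that $f(xg(x))$ makes sense because each $x_ig(x)$ lies in $G^0$ (as noted above), and that compositional inversion in $G^c$ is available, which hinges on the leading coefficient being the unit $1_A$ of $A$; both hold automatically here, so no genuine obstacle remains beyond this bookkeeping.
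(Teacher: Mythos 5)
Your proposal is correct and follows the same route as the paper: the paper also establishes the isomorphism via the map $\mu\colon f\mapsto xf$ and the identity $\mu(f\bullet g)(x)=xg(x)f(xg(x))=\mu(f)(\mu(g)(x))$, which shows $\mu$ intertwines shifted composition with ordinary substitution on $G^c$. You merely spell out the bijectivity of $\mu$ and the group axioms on $(G^c,\circ)$ in more detail than the paper does.
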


We turn now back to the general case and explicitly compute the product for $f=1_A+f'$ and $g=1_A+g'$ in $G^1$
\allowdisplaybreaks
\begin{gather}
 (f\bullet g)(x) =g(x)f(xg(x)) 		
 =1_A + g^{\prime}(x) + f^{\prime}(xg(x))
 			+ \sum_{u,v \in \N^*}g_uf_v x_{u} (xg(x))_{v}\label{newproduct} \\
 \hphantom{(f\bullet g)(x)}{}
 = 1_A + g^{\prime}(x) + f^{\prime}(xg(x))
 	 + \sum_{\substack{u \in \N^*,\,v=i_1\dotsm i_k \in \N^*\\ u_1,\dotsc,u_k \in \N^*\cup \{ \mathbf{1} \}}}
 f_vg_ug_{u_1}\dotsm g_{u_k} x_{u}x_{i_1}x_{u_1}\dotsm x_{i_k}x_{u_k}.\nonumber
 \end{gather}
The new product on $G^1$ is associative. Indeed, an explicit computation shows that
\begin{align*}
 (f \bullet g) \bullet h(x) = h(x)(f \bullet g)(xh(x))= h(x)g(xh(x)) f(x h(x)g(xh(x))).
\end{align*}
We note that $h(x)g(xh(x))=g \bullet h (x)$ and
\begin{equation*}
		f(x h(x)g(xh(x)))=1_A+\sum_{u \in \N^*}f_u (x h(x)g(xh(x)))_u,
\end{equation*}
where
\begin{equation*}
	(x h(x)g(xh(x)))_i
	=(xh(x))_ig(xh(x))
	=x_ih(x)g(xh(x)).
%	=x_ih(x) + x_ih(x)\sum_ug_uz_u^h(x).
\end{equation*}
Compare this with
\begin{align*}
	f \bullet (g \bullet h)(x) &= (g \bullet h)(x) f(x({g \bullet h})(x))= h(x)g(xh(x)) f(x(g \bullet h)(x)).
\end{align*}
Here
\begin{equation*}
	f(x({g \bullet h})(x))=1_A + \sum_u f_u (x(g \bullet h)(x))_u
\end{equation*}
and
\begin{equation*}
	(x(g \bullet h)(x))_i
	=x_i (g \bullet h)(x)
	=x_ih(x)g(xh(x)),
\end{equation*}
which shows associativity of the shifted composition on $G^1$, that is,
\begin{equation*}
	(f \bullet g) \bullet h(x) = f \bullet (g \bullet h)(x).
\end{equation*}
The unit for shifted composition \eqref{monoprod} is $1_A$. Indeed
\begin{equation*}
	1_A \bullet g(x)=g(x)1_A=g(x)=g \bullet 1_A=1_Ag(x).
\end{equation*}
In fact, we have

\begin{Proposition}\label{thm:monogroup}
$G^\bullet:=\big(G^1,\bullet\big)$ is a non-commutative group with unit~$1_A$.
\end{Proposition}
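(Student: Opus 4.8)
The plan is to observe that most of the group axioms are already in place and to concentrate on the one genuinely new point. Associativity of $\bullet$ and the fact that $1_A$ is a two-sided unit have been verified above, and closure is immediate: for $f,g\in G^1$ the empty-word coefficient of $f\bullet g$ is $g_0f_0=1_A$, so $f\bullet g\in G^1$. Hence $\big(G^1,\bullet\big)$ is already a monoid, and it remains only to produce inverses and to exhibit a pair of non-commuting elements.

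For the inverses I would use the filtration of $R$ by word length together with the triangular shape of the product read off from \eqref{newproduct}. Writing $f=1_A+f'$, $g=1_A+g'$ and fixing a word $w$ with $|w|=n$, a short degree count on the three groups of terms in \eqref{newproduct} shows that the only way a length-$n$ coefficient of $g$ enters $(f\bullet g)_w$ is through the summand $g'(x)$, namely as $g_w$ with coefficient $1_A$; in $f'(xg(x))$ and in the last sum every occurring factor $g_u,g_{u_1},\dots,g_{u_k}$ is indexed by a word strictly shorter than $w$. Consequently
\begin{equation*}
(f\bullet g)_w = g_w + \Phi_w\big(f;(g_u)_{|u|<n}\big),
\end{equation*}
where $\Phi_w$ is a fixed polynomial expression in the coefficients of $f$ and in the coefficients of $g$ of length $<n$. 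Given $f\in G^1$, the equations $(f\bullet g)_w=0$ for all $w\in\N^*$ (the empty-word coefficient being automatically $1_A$) can then be solved recursively on $n$: at length one they give $g_i=-f_i$, and at length $n$ they force $g_w=-\Phi_w\big(f;(g_u)_{|u|<n}\big)$, which is well defined because the leading coefficient is the invertible $1_A$. This builds a unique right inverse $g\in G^1$ of $f$.

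I expect this degree count — certifying that no length-$n$ coefficient of $g$ besides $g_w$ can appear in $(f\bullet g)_w$ — to be the only real obstacle; it is routine but must be carried out carefully for each of the three families of terms. Once every element of the monoid $G^1$ has a right inverse, a standard argument upgrades it to a two-sided one: if $g$ is a right inverse of $f$ and $h$ a right inverse of $g$, then
\begin{equation*}
f = f\bullet 1_A = f\bullet(g\bullet h) = (f\bullet g)\bullet h = 1_A\bullet h = h,
\end{equation*}
so $g\bullet f=g\bullet h=1_A$ and $g$ is also a left inverse. Therefore $\big(G^1,\bullet\big)$ is a group. Non-commutativity finally follows from the one-variable situation: the series involving the single variable $x_1$ form a $\bullet$-subgroup on which the product reduces to the one-dimensional shifted composition, and by the Corollary above this subgroup is isomorphic to the group of tangent-to-identity formal diffeomorphisms under composition. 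The latter is non-abelian — for instance $x+x^2$ and $x+x^3$ do not commute — so $G^\bullet$ is non-commutative, completing the proof.
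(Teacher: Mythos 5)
Your proof is correct and takes essentially the same route as the paper: the paper likewise obtains the inverse by recursively computing its coefficients from the triangular structure of the product, only it works from the left-inverse identity $1_A=f^{\bullet -1}\bullet f(x)=f(x)f^{\bullet -1}(xf(x))$ where you solve the right-inverse equation $f\bullet g=1_A$. You are simply more explicit about the degree count, the upgrade from a one-sided to a two-sided inverse, and the non-commutativity (via the univariate subgroup), all of which the paper's one-line proof leaves implicit.
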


\begin{proof}
The identity
\begin{equation*}
	1_A=f^{\bullet -1} \bullet f(x)=f(x)f^{\bullet -1} (xf(x))
\end{equation*}	
allows to recursively (and uniquely) compute the coefficients of the series $f^{\bullet -1}$ from those of~$f$.\looseness=1
\end{proof}

\begin{Remark}\label{rmk:known}
We note that the definition of the product \eqref{monoprod} is motivated by the shuffle convolution product defined in \cite{ebrahimipatras_15}, as will become clear in Section \ref{sec:Hopf}. Moreover, we observe that a variation of the sequence defining the product \eqref{monoprod} appears in Anshelevich \cite[Corollary~7]{anshelevich2010}. In fact, here the sequence defining the product $f \bullet g$ appears in reversed order, i.e., $f(zg(z))g(z)=g(z)^{-1} (f \bullet g)(z)g(z)$, in relation to a statement on positive definiteness for the coefficient sequences. However, its group-theoretical properties have not been considered by Anshelevich. Also, looking at Anshelevich's free Wick polynomials \cite{Anshelevich_04} from a shuffle Hopf algebraic perspective, as was done in our work \cite{EFPTZ2021}, one may extract the product \eqref{monoprod} at the level of formal power series. See reference \cite[Theorem~3.10, Proposition~3.12, equation~(3.48)]{Anshelevich_04}.
\end{Remark}

\section{The pre-Lie and Lie algebraic structures}\label{sec:pre-Lie}

We recall the notion of left-linear group and some of its properties -- for details, we refer the reader to the recent book \cite[Section~6.4]{CP}. Consider local coordinates $\mathbf x = \big(x^1,\dots,x^n\big)$ on a~Lie group~$G$ in the neighborhood of the identity element $e$, with the property $x^i(e)= 0$, for $1 \le i \le n$. For notational convenience, we identify the system of local coordinates with the element of the group.
Using these coordinates, we assume that the group law reads
\begin{equation*}
	\mathbf z
	= F(\mathbf x;\mathbf y)
	= \sum_{\substack{p\geq0\\q\geq0}} F_{p,q}(\mathbf x;\mathbf y),
\end{equation*}
if $\mathbf z = \mathbf x \cdot \mathbf y$ and where $F_{p,q} \big(x^1,\dots,x^n;y^1,\dots,y^n\big)$ is a polynomial
in $2n$ variables, homogeneous of degrees $p$ and $q$ in the variables~$\mathbf x$ respectively~$\mathbf y$. Then the difference $F_{1,1}(\mathbf x;\mathbf y) - F_{1,1}(\mathbf y;\mathbf x)$ defines the Lie bracket in the Lie algebra $\mathfrak g$ of $G$.

\begin{Definition}
\label{def:left-lin}
The group $G$ is said to be left-linear if $F_{p,q}=0$ for $p\geq 2$, that is, if $F(\mathbf x;\mathbf y)-\mathbf y$ is linear in $\mathbf x$.
\end{Definition}

Let us write $\mathbf x \triangleleft \mathbf y$ for $F_{1,1} (\mathbf x;\mathbf y)$. Then it holds in general that the tangent space $\mathfrak g$ to a~left-linear Lie group is equipped with the binary operation $\triangleleft$ with the structure of a (right) pre-Lie algebra. In fact, from the latter, the Lie algebra structure is inherited as $\mathbf x\triangleleft \mathbf y-\mathbf y\triangleleft \mathbf x$. That is, for arbitrary $\mathbf x$, $\mathbf y$, $\mathbf z$, we have the (right) pre-Lie identity
\begin{equation*}
	(\mathbf x\triangleleft \mathbf y)\triangleleft \mathbf z- \mathbf x\triangleleft
	(\mathbf y\triangleleft\mathbf z)
	= (\mathbf x\triangleleft \mathbf z)\triangleleft \mathbf y- \mathbf x\triangleleft
	(\mathbf z\triangleleft\mathbf y).
\end{equation*}

The definition extends to the infinite-dimensional case -- keeping the requirement that the components of $F_{p,q}$ be polynomials in the coordinates.
In particular, the group $\big(G^1,\bullet\big)$ is an (infinite-dimensional) left-linear group.
Indeed, from equation~\eqref{newproduct} we see that
\begin{equation}
\label{eq:leftlinear}
\begin{split}
 (f\bullet g-g)(x) &= g(x)f'(xg(x)) \\
&	= f^{\prime}(xg(x))
+ \smashoperator[r]{\sum_{\substack{u \in \N^*,\, v=i_1\dotsm i_k \in \N^*\\ u_1,\dotsc,u_k \in \N^*\cup \{ \mathbf{1} \}}}}
 			f_vg_ug_{u_1}\dotsm g_{u_k} x_{u}x_{i_1}x_{u_1}\dotsm x_{i_k}x_{u_k},
\end{split}
\end{equation}
and this expression is linear in the coordinates $(f_v)_{v\in\N^\ast}$ of $f$.

\begin{Proposition}
%\label{thm:monoprelie}
The tangent space $\mathfrak g=G^0$ at $1$ to the left-linear group $\big(G^1,\bullet\big)$ is a right pre-Lie algebra.
\end{Proposition}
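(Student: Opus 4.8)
The plan is to deduce the statement from the general theory of left-linear groups recalled just above, so that the only genuine work is to confirm that this theory applies to $\big(G^1,\bullet\big)$ and to make the resulting pre-Lie product explicit. First I would fix coordinates: every $f\in G^1$ is written $f=1_A+f'$ with $f'=\sum_{w\in\N^*}f_w x_w\in G^0$, so the coefficients $(f_w)_{w\in\N^*}$ are local coordinates vanishing at the identity $1_A$, and the tangent space is canonically identified with $G^0$, with the monomials $x_w$, $w\in\N^*$, as basis. I would then observe that, by \eqref{newproduct}, in each word-length degree the coefficients of $f\bullet g$ are \emph{finite} polynomial expressions in the coordinates of $f$ and $g$; this is exactly the polynomiality hypothesis required for the infinite-dimensional version of Definition~\ref{def:left-lin} and of the associated pre-Lie theorem to be valid.

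Next I would extract the pre-Lie product $\triangleleft=F_{1,1}$. Equation~\eqref{eq:leftlinear} already exhibits $(f\bullet g-g)$ as linear in $f$, which is left-linearity itself; expanding $g=1_A+g'$ and retaining the part bilinear in $f'$ and $g'$ gives
\[
 f'\triangleleft g' = \sum_{v=i_1\cdots i_k\in\N^*} f_v \sum_{j=0}^{k} x_{i_1}\cdots x_{i_j}\, g'(x)\, x_{i_{j+1}}\cdots x_{i_k},
\]
with the conventions that $j=0$ contributes $g'(x)x_v$ and $j=k$ contributes $x_v g'(x)$. In words, $f'\triangleleft g'$ inserts $g'$ into each of the $k+1$ slots of every length-$k$ monomial of $f'$.

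Having identified $\triangleleft$, the conclusion is immediate from the general result (cf.~\cite[Section~6.4]{CP}): for a left-linear group the bilinear term $F_{1,1}$ of the group law endows the tangent space with a right pre-Lie structure. If instead one prefers a self-contained verification, I would check the right pre-Lie identity directly from the insertion formula. Computing $(a\triangleleft b)\triangleleft c$ on monomials, the slots of the intermediate monomial split into the $|b|+1$ slots lying inside (or on the boundary of) the inserted copy of $b$, whose contribution reassembles as $a\triangleleft(b\triangleleft c)$, and the remaining slots of $a$ other than the one occupied by $b$. Hence the associator $(a\triangleleft b)\triangleleft c-a\triangleleft(b\triangleleft c)$ collects precisely the terms in which $b$ and $c$ are inserted into two \emph{distinct} slots of $a$; since such a term depends only on the unordered placement data, it is manifestly symmetric under $b\leftrightarrow c$, which is the right pre-Lie identity.

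I expect the main obstacle to be purely one of bookkeeping in this last step: correctly accounting for how the slot structure of $a$ is modified after the first insertion of $b$ -- in particular recognizing that the two boundary slots adjacent to $b$ must be counted among the slots of $b$, not of $a$ -- so that the ``nested'' insertions cleanly separate from the ``independent'' ones. In the citation-based route the only thing to watch is that the degreewise polynomiality noted above is what licenses transporting the finite-dimensional statement to this graded, infinite-dimensional setting.
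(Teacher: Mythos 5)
Your proposal is correct and follows essentially the same route as the paper: you extract from \eqref{eq:leftlinear} the same insertion product $x_{i_1}\cdots x_{i_n}\triangleleft x_{j_1}\cdots x_{j_m}=\sum_{k=0}^n x_{i_1}\cdots x_{i_k}\,x_{j_1}\cdots x_{j_m}\,x_{i_{k+1}}\cdots x_{i_n}$ and verify the right pre-Lie identity by observing that the associator reduces to insertions of $\mathbf y$ and $\mathbf z$ into distinct slots of $\mathbf x$, hence is symmetric in $\mathbf y\leftrightarrow\mathbf z$ --- exactly the computation in the paper's proof. The bookkeeping point you flag (boundary slots adjacent to the inserted word count with the nested term) is handled correctly and is precisely what makes the nested contributions reassemble into $\mathbf x\triangleleft(\mathbf y\triangleleft\mathbf z)$.
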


\begin{proof}Let us check the property explicitly.
	We consider the coordinates in the basis of words $x_{i_1}\cdots x_{i_n}$. Using the notation of the previous section and equation~\eqref{eq:leftlinear}, we obtain for $F_{1,1}$ and~$\triangleleft$:
\begin{equation}
\label{eq:pre-Lie}
	x_{i_1}\cdots x_{i_n}\triangleleft x_{j_1}\cdots x_{j_m}
	=\sum\limits_{k=0}^n x_{i_1}\cdots x_{i_k} x_{j_1}\cdots x_{j_m}x_{i_{k+1}}\cdots x_{i_n}.
\end{equation}
Denoting the insertion of $\mathbf y:= x_{j_1}\cdots x_{j_m}$ in position $k$ inside $\mathbf x:=x_{i_1}\cdots x_{i_n}$ by
\begin{equation*}
	x_{i_1}\cdots x_{i_k} \mathbf y \, x_{i_{k+1}}\cdots x_{i_n}
	:=x_{i_1}\cdots x_{i_k} x_{j_1}\cdots x_{j_m}x_{i_{k+1}}\cdots x_{i_n},
\end{equation*}
we get, with a self-explaining notation,
\begin{gather*}
	(\mathbf x\triangleleft \mathbf y)\triangleleft \mathbf z - \mathbf x\triangleleft
	(\mathbf y\triangleleft\mathbf z)=
\sum\limits_{0 \le k < l \le n} x_{i_1}\cdots x_{i_k}
	\mathbf y \, x_{i_{k+1}}\cdots x_{i_l}\mathbf z \, x_{i_{l+1}}\cdots x_{i_n}+
\\
\hphantom{(\mathbf x\triangleleft \mathbf y)\triangleleft \mathbf z - \mathbf x\triangleleft
	(\mathbf y\triangleleft\mathbf z)=}{}
+ \sum\limits_{0 \le k < l \le n}
	x_{i_1}\cdots x_{i_k} \mathbf z \, x_{i_{k+1}}\cdots x_{i_l}\mathbf y \, x_{i_{l+1}}\cdots x_{i_n}.
\end{gather*}
As this expression is symmetric in $\mathbf y$ and $\mathbf z$, we deduce that the product $\triangleleft$ is (right) pre-Lie with associated Lie bracket
\begin{align*}
 [x_{i_1}\cdots x_{i_n} ,x_{j_1}\cdots x_{j_m}] ={}& x_{i_1}\cdots x_{i_n} \triangleleft x_{j_1}\cdots x_{j_m}
		-x_{j_1}\cdots x_{j_m} \triangleleft x_{i_1}\cdots x_{i_n} \\
={}&\sum\limits_{k=1}^{n-1}x_{i_1}\cdots x_{i_k} x_{j_1}\cdots x_{j_m}x_{i_{k+1}}\cdots x_{i_n}\\
 &{} - \sum\limits_{l=1}^{m-1}x_{j_1}\cdots x_{j_l} x_{i_1}\cdots x_{i_n}x_{j_{l+1}}\cdots x_{j_m}.
 \tag*{\qed}
\end{align*}
\renewcommand{\qed}{}\end{proof}

\begin{Remark}
In the single variable case we deduce from \eqref{eq:pre-Lie} that
\begin{equation}
	\label{eq:prelie.x}
	x^n\triangleleft x^m=(n+1)x^{n+m},
\end{equation}
so that \(\big[x^n,x^m\big]=(n-m)x^{n+m}\).
The corresponding pre-Lie algebra is isomorphic to the pre-Lie algebra associated to the group of tangent-to-identity formal diffeomorphisms of the line. The Lie algebra is, up to isomorphism, the Lie algebra of primitive elements in the cocommutative Hopf algebra dual of the Fa\`a di Bruno Hopf algebra.
\end{Remark}

\section{Coordinate Hopf algebra}\label{sec:Hopf}

From now on, we will use freely general and standard results and notions from the theory of bialgebras and Hopf algebras such as convolution products, characters, infinitesimal characters, and the Baker--Campbell--Hausdorff formula. The reader is referred to \cite{CP} for details.

The group $\big(G^1,\bullet\big)$ is pro-unipotent (that is, an inverse limit of unipotent groups). This can be deduced for example from the observation that the ring $R$ of formal power series is the inverse limit of the quotients $A<x_1,\ldots, x_k,\ldots >/I(n)$, where $I$ is the ideal of the algebra of non-commutative polynomials spanned by degree $n$ monomials $x_{i_1} \cdots x_{i_n}$. As such, $\big(G^1,\bullet\big)$ is the group of characters of a commutative Hopf algebra (see \cite[Section 3.6]{CP}). Technically, this Hopf algebra is, as an algebra, the direct limit of the polynomial algebras over finite subsets of the set of coordinate functions $(f_v)_{v\in\N^\ast}$ on $G^1$.
The algebra structure is the product of polynomials. The coproduct is obtained automatically by dualizing the group law.

However, it is convenient to identify $\big(G^1,\bullet\big)$ with the group of characters of a larger and, more importantly, non-commutative Hopf algebra. This will put at our disposal the tools and techniques available for studying shuffle groups in the sense of \cite{EFPa2019}.

Recall that $\N^*$ is the free semigroup over the alphabet of positive integers, $\N=\{1,2,3,\ldots\}$. Let $V$ denote the vector space spanned by it. Elements in $V$ are linear combinations of non-empty words in the letters of the alphabet and it naturally possesses the structure of a non-unital associative algebra, the product being the unique bilinear extension of the concatenation of words. We write $V^+$ for the augmentation of the algebra $V$ by a unit (that we identify as before with the empty word denoted here $\mathbf{1}$).

There is a natural bijection $\Lambda \colon \mathrm{Lin}\big(V^+,A\big) \to R$ given by
\begin{equation}
\label{bij}
	\Lambda(\phi):= \phi(\mathbf{1})+\sum_{w \in \N^*} \phi(w)x_w,
\end{equation}
where $\Lambda(\phi)$ can be understood as a generating series for the functional $\phi$.
Let
\[
	T(V) :=\bigoplus_{n\ge0}V^{\otimes n}
\]
be the tensor algebra over $V$, where $V^{\otimes 0}\cong \mathbb{K}\1$ is one-dimensional. To avoid confusion over the use of several tensor products, we denote elements $w_1\otimes\cdots\otimes w_k$ of $T(V)$, where $w_i\in V$, by $w_1| \cdots | w_k$, that is, by inserting vertical bars instead of the usual tensor product symbol.\footnote{This resembles the algebraic part of the double bar construction \cite{Baues81}.} In particular, the concatenation product $m_{|}\colon T(V) \otimes T(V) \to T(V)$ sends words $w_1,w_2 \in V$ to $m_{|}(w_1 \otimes w_2):=w_1 | w_2$ and, more generally, $m_|((w_1|\cdots |w_k)\otimes (w_{k+1}|\cdots |w_n))=w_1|\cdots |w_k|w_{k+1}|\dots |w_n$. We denote $T_+(V) :=\bigoplus_{n\ge1}V^{\otimes n}$ the augmentation ideal. The bijection above from $\mathrm{Lin}(V^+,A)$ to $R$ extends to a linear map from $\mathrm{Lin}(T(V),A)$ to $R$, which we still write abusively $\Lambda$, using the formula \eqref{bij}. It is important to notice that the value of $\phi$ on the spaces $V^{\otimes n}$ are not taken into account for $n\geq 2$.

Let us denote by $\mathcal{G}(A) := \mathrm{Hom}_{\scriptscriptstyle \mathrm{alg}}(T(V),A)$ the set of algebra morphisms, i.e., multiplicative unital maps (or characters) in $\mathrm{Lin}(T(V),A)$, and by $\mathcal{L}(A)$ the set of so-called infinitesimal characters in $\mathrm{Lin}(T(V),A)$. These are the linear maps that vanish on $\1$ as well as non-trivial products of words, i.e., on $\bigoplus_{n\ge2}V^{\otimes n}$. By their very definition, elements in $\mathcal{G}(A) $ and $\mathcal{L}(A)$ are entirely characterized by their values on the elements of the semigroup $\N^*$ that form a basis of~$V$. By restricting $\Lambda$ to $\mathcal{G}(A)$, respectively~$\mathcal{L}(A)$, the existence of two bijections of sets follows:%
\begin{equation}\label{grell}
	\Lambda_{\mathrm{gr}}\colon \ \mathcal{G}(A)\to G^1,
	\qquad
	\Lambda_{\mathrm{Lie}}\colon \ \mathcal{L}(A)\to G^0.
\end{equation}

Given a word $w=a_1\dotsm a_n \in V$ and a subset $S=\{i_1<\dotsb<i_k\} \subseteq [n]$ we set $w_S:=a_{i_1} \dotsm a_{i_k} \in V$. The complement $S^c:= [n] \setminus S$ can be written as the disjoint union of $m=m(S)$ maximal intervals $J_1^S,\dotsc, J_m^S$ defined through the set $S$.

We introduce a coproduct $\Delta \colon V \to V\otimes T(V)$ by setting $\Delta\1=\1\otimes\1$ and for $w=a_1\dotsm a_n \in V$%
\begin{equation}
\label{delteq}
	\Delta(a_1\dotsm a_n):=\sum_{S\subseteq[n]}
	w_S \otimes w_{J^S_1}\vert\dotsm\vert w_{J^S_m},
\end{equation}
which is multiplicatively extended to $T(V)$:
\begin{equation*}
	\Delta(w_1|\cdots|w_n):=\Delta(w_1)\cdots\Delta(w_n)\in T(V)\otimes T(V).
\end{equation*}

\begin{Theorem}[\cite{ebrahimipatras_15}]
The space $T(V)$ with product $m_{|}$ and coproduct $\Delta$ defined in \eqref{delteq} is a graded connected non-commutative non-cocommutative bialgebra, that we denote by $H:=(T(V),\Delta,m_{|},\epsilon,\eta)$.
\end{Theorem}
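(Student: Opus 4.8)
The plan is to verify the bialgebra axioms while exploiting two structural facts: $T(V)$ is the free unital associative algebra on the vector space $V$, and $\Delta$ was defined as the multiplicative extension of its restriction $\Delta|_V\colon V\to V\otimes T(V)$. Associativity and unitality of $(T(V),m_{|},\eta)$ are the standard tensor-algebra facts. The compatibility axioms---that $\Delta$ and $\epsilon$ are algebra morphisms---hold essentially by construction: $\Delta$ is multiplicative by definition, and $\epsilon$ (the projection onto $V^{\otimes 0}=\mathbb K\1$) is multiplicative because $u|v$ is a scalar precisely when both $u$ and $v$ are. Hence the genuine content is coassociativity and counitality of $\Delta$.

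For coassociativity I would first reduce to the generators. Since $\Delta$ is an algebra morphism, both $(\Delta\otimes\id)\circ\Delta$ and $(\id\otimes\Delta)\circ\Delta$ are algebra morphisms $T(V)\to T(V)^{\otimes 3}$; by the universal property of the free algebra $T(V)$ they coincide as soon as they agree on the generating space $V$. It therefore suffices to prove $(\Delta\otimes\id)\Delta(w)=(\id\otimes\Delta)\Delta(w)$ for a single word $w=a_1\dotsm a_n$. I would do this by giving one common description of the threefold coproduct, indexed by colourings $c\colon[n]\to\{1,2,3\}$: the colour-$1$ positions form the (single-word) left tensor factor $w_{c^{-1}(1)}$, the colour-$2$ positions form the middle factor, and the colour-$3$ positions form the right factor, the latter two being barred into maximal runs by an explicit separation rule. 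Expanding each iterated coproduct and reading off which positions land in which slot must recover this same sum; the only nontrivial point is that the two resulting bar structures agree.

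The main obstacle is exactly this bar-matching, because the two iterations build the interval decompositions in different ways: in $(\Delta\otimes\id)\Delta$ the middle bars are the maximal runs of colour-$2$ positions inside the selected subword $w_S$ (consecutiveness within $S$), whereas in $(\id\otimes\Delta)\Delta$ they arise from the maximal runs of colour-$2$ positions inside the already-extracted intervals $w_{J^S_i}$. The observation that reconciles them is that colour-$3$ positions are transparent for the colour-$2$ grouping while colour-$1$ positions are not: in either computation two colour-$2$ positions end up in the same bar precisely when no colour-$1$ position lies between them, and two colour-$3$ positions end up in the same bar precisely when every intermediate position is also colour-$3$. Checking this equivalence of separation rules is the combinatorial heart of the argument.

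The remaining assertions are short. Counitality follows because in $\Delta(a_1\dotsm a_n)=\sum_{S\subseteq[n]} w_S\otimes w_{J^S_1}|\dotsm|w_{J^S_m}$ only $S=\emptyset$ survives $(\epsilon\otimes\id)$ (returning $w$, since $[n]$ is then a single interval) and only $S=[n]$ survives $(\id\otimes\epsilon)$, and multiplicativity propagates this to all of $T(V)$. For the grading I would grade $T(V)$ by total number of letters, $\deg(w_1|\dotsm|w_k)=|w_1|+\dotsm+|w_k|$; concatenation adds degrees, and each term of $\Delta(a_1\dotsm a_n)$ has bidegree $(|S|,n-|S|)$ summing to $n$, so both structure maps are graded. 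Connectedness is immediate: since every word is non-empty, the degree-$0$ part is exactly $\mathbb K\1$. Non-commutativity of $m_{|}$ is witnessed by $a_1|a_2\neq a_2|a_1$, and non-cocommutativity by the term $a_2\otimes(a_1|a_3)$ occurring in $\Delta(a_1a_2a_3)$: since $\Delta|_V$ lands in $V\otimes T(V)$, every left tensor factor of $\Delta(w)$ is a single word, so the flipped term $(a_1|a_3)\otimes a_2$ cannot appear and $\tau\circ\Delta\neq\Delta$.
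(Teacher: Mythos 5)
The paper does not prove this statement: it is quoted from the reference [Ebrahimi-Fard--Patras, \textit{Proc.~A} 2015], so there is no in-text argument to compare yours against. Judged on its own, your proof is correct and is essentially the standard one. The reduction of coassociativity to the generating space $V$ via the universal property of the free algebra is legitimate, since both $(\Delta\otimes\id)\circ\Delta$ and $(\id\otimes\Delta)\circ\Delta$ are algebra morphisms into $T(V)^{\otimes 3}$. Your three-colouring description is the right common refinement, and the separation rules you state are exactly the correct invariant: on the $(\Delta\otimes\id)\Delta$ side the middle bars come from consecutiveness of $S\setminus T$ \emph{inside} $S$, so colour-$3$ positions are invisible and only colour-$1$ positions separate; on the $(\id\otimes\Delta)\Delta$ side each interval $J^{S'}_i$ contributes a single unbarred word to the middle slot (the first tensor leg of $\Delta$ lands in $V$), and distinct intervals are separated by at least one colour-$1$ position, so the grouping is again ``no colour-$1$ in between''; the colour-$3$ rule matches because within a fixed $J^{S'}_i$ every intermediate position of $[n]$ already lies in $J^{S'}_i$. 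The left-to-right ordering of the bars agrees on both sides since each group is recorded at its leftmost position. The remaining verifications (counit picking out $S=\varnothing$ and $S=[n]$, grading by total letter count with $\mathbb K\1$ in degree zero because words in $V$ are non-empty, and the need to go to a three-letter word for non-cocommutativity, $\Delta$ being cocommutative on two-letter words) are all handled correctly. The only thing I would ask you to write out in a final version is the short verification of the equivalence of the two separation rules, which you have identified but deferred.
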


Here, the unit map $\eta\colon \mathbb K\to T(V)$, respectively~the counit map $\epsilon\colon T(V)\to \mathbb K$, are the obvious inclusion of, respectively~projection to the scalar component $\mathbb K=V^{\otimes 0}$ in the space $T(V)$.

Let now $\mathrm{Lin}(T(V),A)$ denote the space of linear maps taking values in the unital commutative algebra $A$. Recall that this space has a natural unital algebra structure given by convolution, that is, for $\phi, \psi \in\mathrm{Lin}(T(V),A)$ we set
\[
	\phi * \psi := m_A(\phi \otimes \psi) \Delta,
\]
where $m_A$ denotes the product in $A$. The unit for the convolution product is given by $\varepsilon_A:=\eta_A \circ \epsilon$, where $\epsilon$ is the counit of $T(V)$ and $\eta_A\colon \mathbb K\to A$ is the unit-map of $A$ ($\eta_A(1):=1_A$).

Recall also that $H$ is automatically a Hopf algebra. It is well known that the set $\mathcal{G}(A)$ forms a group under convolution; the inverse of an element is given by composition with the antipode of $T(V)$. Similarly, $\mathcal{L}(A)$ is a Lie algebra for the Lie bracket obtained by anti-symmetrizing the convolution product, i.e., $[\phi,\psi]=\phi*\psi - \psi*\phi$. We also have the existence of inverse bijections
\begin{equation*}
	\exp^\ast\colon \ \mathcal{L}(A)\to \mathcal{G}(A), \qquad \log^\ast\colon \ \mathcal{G}(A)\to \mathcal{L}(A),
\end{equation*}
with $\exp^\ast\circ\log^\ast={\rm id}_{\mathcal{G}(A)}$ and $\log^\ast\circ\exp^\ast={\rm id}_{\mathcal{L}(A)}$.

\begin{Remark}
\label{remmono}
In the context of non-commutative probability, it has been shown elsewhere that if the linear unital map $\varphi\colon \mathcal{A} \to \mathbb{K}$ on a non-commutative probability space $(\mathcal{A},\varphi)$ is extended to a character $\Phi$ on the double tensor algebra over $\mathcal{A}$, suitably equipped with a Hopf algebra structure very similar to the one we defined on $T(V)$, then $\log^\ast(\Phi)$ computes the associated multivariate monotone cumulants. We refer to \cite{ebrahimipatras_17} for details. The reader should keep in mind that these results are in the background of the developments in the present article.
\end{Remark}

\begin{Theorem}
\label{theorem:monotone}
The map $\Lambda_{\mathrm{gr}}$ defines a group isomorphism between the group $(\mathcal{G}(A),\ast)$ and the group $\big(G^1,\bullet\big)$.
\end{Theorem}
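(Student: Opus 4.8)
The plan is to exploit the fact that $\Lambda_{\mathrm{gr}}$ is already known to be a bijection of sets by \eqref{grell}; hence it suffices to verify that it intertwines the convolution product $\ast$ with the shifted composition $\bullet$, that is, that $\Lambda_{\mathrm{gr}}(\phi\ast\psi)=\Lambda_{\mathrm{gr}}(\phi)\bullet\Lambda_{\mathrm{gr}}(\psi)$ for all characters $\phi,\psi\in\mathcal G(A)$. Once this homomorphism property is established, the isomorphism statement is immediate. Fix $\phi,\psi\in\mathcal G(A)$ and set $f:=\Lambda_{\mathrm{gr}}(\phi)$, $g:=\Lambda_{\mathrm{gr}}(\psi)$. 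By \eqref{bij} this means $f_w=\phi(w)$ and $g_w=\psi(w)$ for $w\in\N^\ast$, together with $f_0=\phi(\mathbf 1)=1_A$ and $g_0=\psi(\mathbf 1)=1_A$ since characters are unital. Note that $\phi\ast\psi$ is again a character, so $\Lambda_{\mathrm{gr}}(\phi\ast\psi)$ is well defined and reads off the values of $\phi\ast\psi$ on single words.

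The first step is to extract, for a fixed word $w=a_1\cdots a_n$, the coefficient of $x_w$ on the convolution side. Applying \eqref{delteq} and the definition of $\ast$ gives $(\phi\ast\psi)(w)=\sum_{S\subseteq[n]}\phi(w_S)\,\psi\big(w_{J^S_1}|\cdots|w_{J^S_m}\big)$, and since $\psi$ is multiplicative this equals $\sum_{S\subseteq[n]}\phi(w_S)\prod_{l=1}^{m(S)}\psi\big(w_{J^S_l}\big)$. The second step is to expand the shifted composition directly from \eqref{newproduct}, which can be written uniformly as $g(x)f(xg(x))=\sum_{k\ge0}\sum_{v=i_1\cdots i_k}\sum_{u_0,\dots,u_k\in\N^\ast\cup\{\mathbf 1\}} f_v\,g_{u_0}\cdots g_{u_k}\,x_{u_0}x_{i_1}x_{u_1}\cdots x_{i_k}x_{u_k}$. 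One then reads off the coefficient of $x_w$ as a sum over all factorizations $w=u_0\,i_1\,u_1\cdots i_k\,u_k$ into single letters $i_j$ and (possibly empty) blocks $u_j$.

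The crucial combinatorial observation, which I expect to be the main point of the argument, is that such factorizations are in bijection with subsets $S\subseteq[n]$: the positions occupied by the letters $i_1,\dots,i_k$ form $S$ (so that $v=w_S$ is the $f$-part), while the maximal runs of complementary positions are exactly the nonempty blocks among $u_0,\dots,u_k$, i.e., the intervals $J^S_1,\dots,J^S_{m(S)}$ appearing in \eqref{delteq}. The empty blocks contribute factors $g_{\mathbf 1}=1_A=\psi(\mathbf 1)$, so they are harmless. Matching $f_{w_S}=\phi(w_S)$ and $\prod_{j}g_{u_j}=\prod_{l}\psi(w_{J^S_l})$, the coefficient of $x_w$ in $(f\bullet g)(x)$ equals precisely the expression computed for $(\phi\ast\psi)(w)$ above, with the constant terms agreeing trivially since $f_0g_0=1_A=\phi(\mathbf 1)\psi(\mathbf 1)$.

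This identification of coefficients shows $\Lambda_{\mathrm{gr}}(\phi\ast\psi)=f\bullet g=\Lambda_{\mathrm{gr}}(\phi)\bullet\Lambda_{\mathrm{gr}}(\psi)$, proving that $\Lambda_{\mathrm{gr}}$ is a group homomorphism and hence, being bijective, a group isomorphism. The only delicate steps are the bookkeeping: aligning the gap-decomposition of $w$ with the interval decomposition $S^c=J^S_1\sqcup\cdots\sqcup J^S_{m(S)}$ furnished by the coproduct, and using the character property of $\psi$ to collapse the product of coefficients over blocks into a single evaluation on their concatenation. No genuine obstacle arises beyond this combinatorial matching, which is exactly what the coproduct \eqref{delteq} was designed to encode.
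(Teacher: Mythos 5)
Your proof is correct and follows essentially the same route as the paper's: both reduce the statement to the homomorphism property via the known bijection, expand $g(x)f(xg(x))$ as a sum over factorizations $w=u_0\,i_1\,u_1\cdots i_k\,u_k$, and match the resulting coefficient of $x_w$ with $(\phi\ast\psi)(w)$ computed from the coproduct \eqref{delteq} using multiplicativity of $\psi$. You merely make explicit the subset--factorization bijection that the paper's proof leaves implicit in the phrase ``the sum collapses.''
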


\begin{proof}
We already know that the map is a bijection. We would like to show that for characters $\phi,\psi \in \mathcal{G}(A)$
\begin{equation*}
%\label{monotone}
	\Lambda_{\mathrm{gr}}(\phi * \psi )(x)= f \bullet g (x),
\end{equation*}
where $f (x) := \Lambda_{\mathrm{gr}}(\phi)(x)$ and $g(x) := \Lambda_{\mathrm{gr}}(\psi)(x)$.

We first recall that for a word $v=i_1\dotsm i_k$ and element $g \in G^1$, we have
\begin{gather*}
\begin{split}
 & (xg(x))_v
 =(xg(x))_{i_1} \dotsm (xg(x))_{i_k}
 =x_{i_1}g(x)x_{i_2}g(x) \cdots x_{i_k}g(x)\\
		&
\hphantom{(xg(x))_v}{}
=x_v + \sum_{\substack{u_1,\dotsc,u_k\in \{\1\} \cup \N^* \\ u_1 \cdots u_k \neq \1}}
 g_{u_1}\dotsm g_{u_k}x_{i_1}x_{u_1}\dotsm x_{i_k}x_{u_k}.
 \end{split}
\end{gather*}
Copying \eqref{newproduct}, we have
 \begin{align}
 f \bullet g(x)
 &=g(x)f(xg(x))
 =\sum_{u,v \in \{\1\} \cup \N^*}g_uf_vx_{u}(xg(x))_{v}\nonumber\\
 &=g(x) + \sum_{\substack{u,u_1,\dotsc,u_k\in \{\1\} \cup \N^* \\v=i_1\dotsm i_k \in \N^*}}
 f_vg_ug_{u_1}\dotsm g_{u_k}x_{u}x_{i_1}x_{u_1}\dotsm x_{i_k}x_{u_k}. \label{thesum1}
 \end{align}
Then, writing $f_w=\phi(w)$ and $g_u=\psi(u)$, the above sum collapses to
\begin{align*}
	g(x) &+ \sum_{\substack{u,u_1,\dotsc,u_k\in \{\1\} \cup \N^* \\v=i_1\dotsm i_k \in \N^*}}
	f_vg_ug_{u_1}\dotsm g_{u_k}x_{u}x_{i_1}x_{u_1}\dotsm x_{i_k}x_{u_k} \nonumber\\
 	&=1+ \sum_{w\in \N^*}(\phi*\psi)(w)x_w
 	 =\Lambda_{\mathrm{gr}}(\phi*\psi)(x). \nonumber
\end{align*}
The proof is complete.
\end{proof}

A similar calculation shows that the analog statement holds at the level of Lie algebras:

\begin{Theorem}\label{theorem:Liemonotone}
The map $\Lambda_{\mathrm{Lie}}$ defines a Lie algebra isomorphism between the Lie algebras $\mathcal{L}(A)$ and $\big(G^0,[- , - ]\big)$.
\end{Theorem}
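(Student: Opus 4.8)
The plan is to exploit that $\Lambda_{\mathrm{Lie}}$ is already known to be a bijection of sets by~\eqref{grell}, so that only the homomorphism property $\Lambda_{\mathrm{Lie}}([\phi,\psi])=[\Lambda_{\mathrm{Lie}}(\phi),\Lambda_{\mathrm{Lie}}(\psi)]$ needs to be checked, where on the left $[\phi,\psi]=\phi*\psi-\psi*\phi$ and on the right the bracket is the antisymmetrization $a\triangleleft b-b\triangleleft a$ of the pre-Lie product~\eqref{eq:pre-Lie}. I would mirror the computation in the proof of Theorem~\ref{theorem:monotone}, the only new input being the behaviour of the convolution on \emph{infinitesimal} characters.

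First I would compute $(\phi*\psi)(w)$ for a single word $w=a_1\cdots a_n$ using the coproduct~\eqref{delteq} and $\phi*\psi=m_A(\phi\otimes\psi)\Delta$. Applying $\phi$ to the left leg $w_S\in V$ and $\psi$ to the right leg $w_{J^S_1}\vert\cdots\vert w_{J^S_m}\in T(V)$, the hypothesis that both maps are infinitesimal characters collapses the sum drastically: $\phi$ forces $S\neq\emptyset$ (it vanishes on $\1$), while $\psi$ forces $m(S)=1$ and $S^c\neq\emptyset$ (it vanishes on $\1$ and on all products of two or more words). Hence only those $S$ survive whose complement $S^c$ is a single nonempty interval $\{p+1,\dots,p+q\}$, giving
\begin{equation*}
	(\phi*\psi)(a_1\cdots a_n)=\sum_{\substack{0\le p,\ q\ge 1\\ p+q\le n}}\phi(a_1\cdots a_p\,a_{p+q+1}\cdots a_n)\,\psi(a_{p+1}\cdots a_{p+q}).
\end{equation*}

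Then I would reassemble the generating series $\sum_{w\in\N^*}(\phi*\psi)(w)\,x_w$. Reindexing by the prefix $u=a_1\cdots a_p$, the inserted block $v=a_{p+1}\cdots a_{p+q}$ and the suffix $u'=a_{p+q+1}\cdots a_n$, the sum factors as $\sum_{u,u',v}\phi(uu')\,\psi(v)\,x_u x_v x_{u'}$. Recognizing $\sum_v\psi(v)x_v=\Lambda_{\mathrm{Lie}}(\psi)$ and summing the insertion over all gaps of a fixed word $uu'$ reproduces exactly the pre-Lie product of~\eqref{eq:pre-Lie}; bilinearity of $\triangleleft$ then yields $\sum_{w}(\phi*\psi)(w)x_w=\Lambda_{\mathrm{Lie}}(\phi)\triangleleft\Lambda_{\mathrm{Lie}}(\psi)$. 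Antisymmetrizing in $\phi$ and $\psi$ and using the formula for the bracket on $G^0$ gives the claim.

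The one point requiring care --- rather than a genuine obstacle --- is that $\phi*\psi$ is \emph{not} itself an infinitesimal character, so $\Lambda_{\mathrm{Lie}}$ does not literally apply to it; what is legitimate is reading off its values on single words, which is what the displayed computation does. The final application of $\Lambda_{\mathrm{Lie}}$ is justified because $[\phi,\psi]=\phi*\psi-\psi*\phi$ \emph{is} infinitesimal (the standard fact that $\mathcal{L}(A)$ is a Lie subalgebra under $*$), so that $\Lambda_{\mathrm{Lie}}([\phi,\psi])=\sum_w(\phi*\psi-\psi*\phi)(w)x_w$ is exactly the antisymmetrized series above. The combinatorial heart is the single-maximal-interval condition forced by infinitesimality, which is precisely the gap-insertion defining $\triangleleft$; the bookkeeping relating subsets $S$ to prefix/block/suffix decompositions is where I would be most careful.
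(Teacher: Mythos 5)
Your proof is correct and is essentially the argument the paper intends: the paper itself only remarks that ``a similar calculation'' to the proof of Theorem~\ref{theorem:monotone} gives the result, and your computation supplies exactly that calculation, with the key observation that infinitesimality of $\phi$ and $\psi$ reduces the coproduct \eqref{delteq} to the terms where $S^c$ is a single nonempty interval, reproducing the gap-insertion pre-Lie product \eqref{eq:pre-Lie} (this is also consistent with Proposition~\ref{prop:lprod}). Your caveat that $\phi*\psi$ is not itself infinitesimal, while $[\phi,\psi]$ is, is well placed and handled correctly.
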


\subsection{The BCH group law}\label{ssec:bch}

Recall now the Baker--Campbell--Hausdorff (BCH) formula in the free associative algebra over two variables $X$, $Y$:
\begin{equation*}
	\exp(X)\exp(Y)=\exp({\rm BCH}(X,Y)),
\end{equation*}
where ${\rm BCH}(X,Y)$ is an element in the free Lie algebra over $X$ and $Y$, that is, a linear combination of iterated Lie brackets of $X$ and $Y$ ($[X,Y]:=XY-YX$) such as $[X,Y]$, $[X,[X,Y]]$, $[[X,Y],[X,Y]]$, and so on. Setting $f\ast_{\scriptscriptstyle{\text{BCH}}}g:=\text{BCH}(f,g)$, this formula defines the BCH group law on the Lie algebra $\big(G^0,[- , - ]\big)$ of the infinite-dimensional group $\big(G^1,\bullet\big)$. A BCH group law is defined on $\mathcal{L}(A)$ similarly. Equivalently, it is defined by transportation of the group law on~$\mathcal{G}(A)$ along $\exp^\ast$: for $\phi, \rho$ in $\mathcal{L}(A)$, we have
\begin{equation*}
	{\rm BCH}(\phi,\rho):=\log^\ast(\exp^\ast(\phi)\ast\exp^\ast(\rho)).
\end{equation*}

\begin{Corollary}
The BCH group law on $\mathcal{L}(A)$ is transported by $\Lambda_{\mathrm{Lie}}$ to the BCH group law on~$G^0$.
\end{Corollary}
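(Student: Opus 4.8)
The plan is to deduce this corollary directly from the isomorphisms already established, without re-examining the combinatorics of the BCH series itself. The key observation is that both BCH group laws in question are defined purely by \emph{transportation} of existing structures, so the statement should reduce to the compatibility of $\Lambda_{\mathrm{Lie}}$ and $\Lambda_{\mathrm{gr}}$ with the exponential and logarithm maps on the two sides.

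First I would make precise what ``the BCH group law on $G^0$'' means in this setting. By Theorem~\ref{theorem:Liemonotone}, $\Lambda_{\mathrm{Lie}}$ is a Lie algebra isomorphism from $\mathcal{L}(A)$ onto $\big(G^0,[-,-]\big)$; since the BCH product is a universal expression in iterated Lie brackets, any Lie algebra carries a well-defined BCH group law, and a Lie algebra isomorphism automatically intertwines the two BCH products. Thus $\Lambda_{\mathrm{Lie}}(\mathrm{BCH}(\phi,\rho)) = \mathrm{BCH}(\Lambda_{\mathrm{Lie}}(\phi),\Lambda_{\mathrm{Lie}}(\rho))$ holds simply because $\Lambda_{\mathrm{Lie}}$ preserves brackets and BCH is bracket-polynomial. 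This already gives the corollary once we agree that the right-hand side is the definition of the BCH law on $G^0$.

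To make this airtight and to connect it to the group-level description, I would set up the commuting square relating the four exponential-type maps. On the algebraic side we have $\exp^\ast\colon\mathcal{L}(A)\to\mathcal{G}(A)$ and its inverse $\log^\ast$; on the power series side, transporting these along the bijections yields maps $\exp^\bullet:=\Lambda_{\mathrm{gr}}\circ\exp^\ast\circ\Lambda_{\mathrm{Lie}}^{-1}\colon G^0\to G^1$ and its inverse $\log^\bullet$. The definition of the BCH law on $G^0$ is then $a\ast_{\scriptscriptstyle\mathrm{BCH}}b := \log^\bullet\big(\exp^\bullet(a)\bullet\exp^\bullet(b)\big)$, exactly paralleling the definition of $\mathrm{BCH}$ on $\mathcal{L}(A)$ via $\log^\ast(\exp^\ast(\phi)\ast\exp^\ast(\rho))$. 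I would then chase the diagram: starting from $\phi,\rho\in\mathcal{L}(A)$, apply $\exp^\ast$, translate to $G^1$ by the group isomorphism $\Lambda_{\mathrm{gr}}$ (Theorem~\ref{theorem:monotone}, which sends $\ast$ to $\bullet$), take the $\bullet$-product, and translate back by $\log^\bullet$. Each arrow is either one of the two bijections in \eqref{grell} or an exp/log map, and the square
\begin{equation*}
	\Lambda_{\mathrm{gr}}\circ\exp^\ast = \exp^\bullet\circ\Lambda_{\mathrm{Lie}}
\end{equation*}
holds by the very definition of $\exp^\bullet$, with the analogous identity for the logarithms.

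Putting these together, for $\phi,\rho\in\mathcal{L}(A)$ with images $a=\Lambda_{\mathrm{Lie}}(\phi)$, $b=\Lambda_{\mathrm{Lie}}(\rho)$, the computation is
\begin{align*}
	\Lambda_{\mathrm{Lie}}\big(\mathrm{BCH}(\phi,\rho)\big)
	&= \Lambda_{\mathrm{Lie}}\big(\log^\ast(\exp^\ast(\phi)\ast\exp^\ast(\rho))\big) \\
	&= \log^\bullet\big(\Lambda_{\mathrm{gr}}(\exp^\ast(\phi)\ast\exp^\ast(\rho))\big) \\
	&= \log^\bullet\big(\Lambda_{\mathrm{gr}}(\exp^\ast(\phi))\bullet\Lambda_{\mathrm{gr}}(\exp^\ast(\rho))\big) \\
	&= \log^\bullet\big(\exp^\bullet(a)\bullet\exp^\bullet(b)\big)
	= a\ast_{\scriptscriptstyle\mathrm{BCH}}b,
\end{align*}
where the second equality uses $\Lambda_{\mathrm{Lie}}\circ\log^\ast=\log^\bullet\circ\Lambda_{\mathrm{gr}}$ and the third uses that $\Lambda_{\mathrm{gr}}$ converts $\ast$ into $\bullet$. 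I do not expect a serious obstacle here: the content is entirely formal, a matter of bookkeeping among bijections that have already been shown to intertwine the relevant products. The only point demanding care is a \emph{definitional} one, namely to state cleanly what ``the BCH group law on $G^0$'' is and to verify that the two candidate definitions --- the bracket-polynomial one coming from the Lie structure on $G^0$, and the transported one via $\exp^\bullet$ and $\bullet$ --- coincide; but this coincidence is itself just the statement that $\exp^\ast$ and the universal BCH identity are compatible, which is standard in any pro-nilpotent or graded-complete Lie algebra, a hypothesis guaranteed here by the pro-unipotence of $\big(G^1,\bullet\big)$ noted at the start of Section~\ref{sec:Hopf}.
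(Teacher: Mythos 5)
Your argument is correct and matches the paper's (implicit) justification: the corollary is stated without a separate proof precisely because, as you note in your first paragraph, $\Lambda_{\mathrm{Lie}}$ is a Lie algebra isomorphism by Theorem~\ref{theorem:Liemonotone} and the BCH product is a universal expression in iterated Lie brackets (convergent here by pro-unipotence), so it is automatically intertwined. Your additional diagram chase through $\exp^\bullet$ and $\log^\bullet$ is a sound, more explicit verification of the same point and is consistent with the paper's remark that the BCH law on $\mathcal{L}(A)$ can equivalently be obtained by transporting the group law on $\mathcal{G}(A)$ along $\exp^\ast$.
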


Notice that it follows from our arguments that there exists a bijection (in fact, an isomorphism) $\exp^G$ between $G^0$ and $G^1$ (recall that the former is the Lie algebra of the latter) given~by
\begin{equation*}
	\exp^G:=\Lambda_{\mathrm{gr}}\circ \exp^\ast\circ \Lambda_{\mathrm{Lie}}^{-1}
\end{equation*}
with inverse
\begin{equation*}
	\log^G:=\Lambda_{\mathrm{Lie}}\circ \log^\ast\circ \Lambda_{\mathrm{gr}}^{-1}.
\end{equation*}
These bijections are given by complex formulas (the same that relate monotone cumulants to moments in free probability, see our Remark~\ref{remmono} above).

\begin{Remark}\label{butcher}
An example of similar nature to the construction of the map $\Lambda$, resp.~$\Lambda_{\mathrm{gr}}$, $\Lambda_{\mathrm{Lie}}$, is provided by Butcher's group of $B$-series in numerical analysis \cite{Butcher21,HLW2006} and its link to a certain combinatorial Hopf algebra on rooted trees. We recall that a $B$-series may be characterised as the Taylor expansion of numerical integration schemes such as Runge--Kutta methods:
\begin{equation*}
	B(\alpha; hf,y) := \sum_{t \in \mathcal{T}} \alpha(t)F_{hf}[t](y),
\end{equation*}
where the sum on the righthand side runs over the set $\mathcal{T}$ of non-planar rooted trees, including the empty tree, and $\alpha$ is a function on $\mathcal{T}$ determined by the numerical method. The other objects involved are a smooth vector field $f$ on $\mathbb{R}^d$, the step size parameter $h \in \mathbb{R}$ and the map $F_f$ which associates a so-called elementary differential to a trees $t \in \mathcal{T}$ and the aforementioned vector field $f$ (it was first described by Cayley in the context of differential equations \cite{Cayley1889}). See \cite{HLW2006} for details. It turns out that composition of two $B$-series, i.e., $B(\alpha; hf,B(\beta; hf,y)) = B(\beta*\alpha; hf,y)$, is tightly linked to a combinatorial Hopf algebra defined on non-planar rooted trees. Indeed, the coefficients of the $B$-series $B(\beta*\alpha; hf,y)$ are computed in terms of the convolution product of the group of Hopf algebra characters over the Butcher--Connes--Kreimer Hopf algebra, \cite{Brouder00,CHV2010,CK98}.
\end{Remark}

\section{Half-shuffle products}\label{sec:shuffle}

The coproduct $\Delta$ on $V$, given in \eqref{delteq}, can be split into the sum of two so-called left and right half-coproducts
\begin{equation}
\label{deltaleft}
	\Delta_\prec(a_1\dotsm a_n)
	:= a_1\dotsm a_n \otimes \1 + \sum_{1\in S\subsetneq[n]}
	w_S\otimes w_{J^S_1}\vert\dotsm\vert w_{J^S_m}
\end{equation}
and
\begin{equation}
\label{deltaright}
	\Delta_\succ(a_1\dotsm a_n)
	:= \1 \otimes a_1\dotsm a_n + \sum_{\substack{1\not\in S\subsetneq[n] \\S \neq \varnothing}}
	w_S\otimes w_{J^S_1}\vert\dotsm\vert w_{J^S_m}.
\end{equation}
Both these half-coproducts are extended to $T_+(V)$ by defining
\begin{equation*}
	\Delta_\prec(w_1\vert w_2 \vert \dotsm \vert w_n)=\Delta_\prec(w_1)\Delta(w_2\vert \dotsm \vert w_n)
\end{equation*}
and similarly for $\Delta_\succ$, so that the coproduct \eqref{delteq} on $T(V)$ can be written as a sum, $\Delta=\Delta_\prec+\Delta_\succ$. It can be shown that the two half-coproducts define an \emph{unshuffle bialgebra} structure on~$T(V)$~\cite{ebrahimipatras_15}.

This induces a splitting of the convolution product on the dual side into a sum of a ``left half-shuffle product'' and a ``right half-shuffle product'' for $A$-valued linear forms on $T_+(V)$ (identified with $A$-valued linear forms on $T(V)$ that vanish on $\mathbb K$)
\[
	\phi \prec \psi := m_A(\phi \otimes \psi )\Delta_\prec,
	\qquad
	\phi \succ \psi := m_A(\phi \otimes \psi )\Delta_\succ
\]
such that the associative convolution product of such linear forms decomposes
\begin{equation}
\label{convolprod}
	\phi * \psi = \phi \succ \psi + \phi \prec \psi.
\end{equation}
The left and right half-shuffle products are then extended partially by setting
\[
	\phi\prec\varepsilon_A:=\phi,
	\qquad
	\varepsilon_A\succ\phi:=\phi,
	\qquad
	\phi\succ\varepsilon_A:=0,
	\qquad
	\varepsilon_A\prec\phi:=0.
\]
The products
$\varepsilon_A\prec\varepsilon_A$, $\varepsilon_A\succ\varepsilon_A$ are left undefined.

Associativity of the convolution product \eqref{convolprod} can be deduced from the fact that the space $(\mathrm{Lin}(T_+(V),A),{\prec},{\succ})$ is a (non-commutative) shuffle algebra \cite{ebrahimipatras_15} as the left half-shuffle product and a right half-shuffle product satisfy the shuffle identities:
\begin{gather*}
	(\phi \prec \psi) \prec \rho = \phi \prec (\psi * \rho), \\ 		%\label{A1}
 	(\phi \succ \psi) \prec \rho = \phi \succ (\psi \prec \rho), \\		%\label{A2}
 	 \phi \succ (\psi \succ \rho) = (\phi * \psi)\succ \rho. 		% \label{A3}
\end{gather*}
Note that these are the identities satisfied by shuffle products in algebraic topology and products of iterated integrals of time-dependent matrices in classical calculus and stochastic integration \`a la Stratonovich \cite{EFP2022}.

\begin{Proposition}
\label{prop:gprod}
Let $\phi\in \mathrm{Lin}(T_+(V),A)$ and $\gamma\in {\mathcal G}(A)$. We set $f:=\Lambda(\phi) \in G^0$ and $g:=\Lambda_{\mathrm{gr}}(\gamma) \in G^1$.
Then we have
\begin{equation}\label{<1}
	\Lambda(\phi\prec\gamma)
	=\sum_{\substack{u_1,\dotsc,u_k\in \{\1\} \cup \N^* \\v=i_1\dotsm i_k\in \N^*}}
	f_vg_{u_1}\dotsm g_{u_k}x_{i_1}x_{u_1}\dotsm x_{i_k}x_{u_k}=f(xg(x)),
\end{equation}
respectively
\[
	\Lambda(\phi\succ\gamma)
	= \sum_{\substack{u_1,\dotsc,u_k \in \{\1\} \cup \N^* \\{\substack{u \in \N^* \\v=i_1\dotsm i_k \in \N^* }}}}
	 f_vg_ug_{u_1}\dotsm g_{u_k}x_{u}x_{i_1}x_{u_1}\dotsm x_{i_k}x_{u_k}
	 =(g(x)-1)f(xg(x)).
\]
\end{Proposition}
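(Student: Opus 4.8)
The plan is to prove both identities by extracting, for each word $w=a_1\cdots a_n\in\N^*$, the coefficient of $x_w$ on the two sides and checking that they agree. On the left this coefficient is, by the very definition of $\Lambda$, the value $(\phi\prec\gamma)(w)$, respectively $(\phi\succ\gamma)(w)$. Since $\Delta_\prec$ and $\Delta_\succ$ map $V$ into $V\otimes T(V)$ and $\Lambda$ only probes a linear form on single words, no values of $\phi$ or $\gamma$ on genuine products $V^{\otimes\ell}$, $\ell\ge 2$, are involved beyond what the character property of $\gamma$ handles. Both sides have vanishing constant term (on the right because $f_0=\phi(\mathbf 1)=0$ since $f\in G^0$), so it suffices to treat $w\in\N^*$.

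First I would expand the right-hand sides. Using the formula $(xg(x))_v=\sum_{u_1,\dots,u_k}g_{u_1}\cdots g_{u_k}x_{i_1}x_{u_1}\cdots x_{i_k}x_{u_k}$ already established in the proof of Theorem~\ref{theorem:monotone} (with $v=i_1\cdots i_k$ and $u_1,\dots,u_k$ ranging over $\{\mathbf 1\}\cup\N^*$), together with $f(xg(x))=\sum_{v\in\N^*}f_v(xg(x))_v$ and $g(x)-1=\sum_{u\in\N^*}g_ux_u$, gives exactly the two displayed sums. Reading off the coefficient of a fixed $x_w$ then amounts to summing over all factorizations $w=x_{i_1}x_{u_1}\cdots x_{i_k}x_{u_k}$ (for $\prec$), or $w=x_{u}x_{i_1}x_{u_1}\cdots x_{i_k}x_{u_k}$ with $u\neq\mathbf 1$ (for $\succ$), where each $i_j$ is a single letter and each $u_j$ a possibly empty word, weighted by $f_{i_1\cdots i_k}g_{u_1}\cdots g_{u_k}$ (times an extra $g_u$ in the $\succ$ case).

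The heart of the argument is a bijection between such factorizations of $w$ and the terms of $\Delta_\prec(w)$, resp.\ $\Delta_\succ(w)$. Given a factorization, let $S\subseteq[n]$ be the set of positions occupied by the distinguished letters $i_1,\dots,i_k$; then $w_S=i_1\cdots i_k$, and the non-empty blocks among the $u_j$ are precisely the words $w_{J^S_1},\dots,w_{J^S_m}$ attached to the maximal intervals of $S^c$, the empty blocks contributing the factor $g_{\mathbf 1}=g_0=1$. Crucially, in the $\prec$-factorization $w$ begins with the distinguished letter $x_{i_1}$, forcing $1\in S$; conversely every $S\ni 1$ arises this way. Hence the coefficient on the right equals $\sum_{1\in S\subseteq[n]}\phi(w_S)\,\gamma(w_{J^S_1}|\cdots|w_{J^S_m})$, where I used $f_v=\phi(v)$, $g_u=\gamma(u)$, and the multiplicativity of the character $\gamma$ to recombine $\gamma(w_{J^S_1})\cdots\gamma(w_{J^S_m})=\gamma(w_{J^S_1}|\cdots|w_{J^S_m})$. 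By \eqref{deltaleft} this is exactly $m_A(\phi\otimes\gamma)\Delta_\prec(w)=(\phi\prec\gamma)(w)$, the boundary term $w\otimes\mathbf 1$ corresponding to $S=[n]$.

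For the $\succ$ identity the same bijection applies, except that the leading block $x_u$ with $u\neq\mathbf 1$ now forces $1\notin S$, and $w_{J^S_1}=u$ is the first (non-empty) maximal interval of $S^c$; the boundary term $\mathbf 1\otimes w$ of $\Delta_\succ$ in \eqref{deltaright} (the case $S=\varnothing$) drops out because $\phi(\mathbf 1)=0$. One obtains $\sum_{1\notin S\subseteq[n],\,S\neq\varnothing}\phi(w_S)\,\gamma(w_{J^S_1}|\cdots|w_{J^S_m})=(\phi\succ\gamma)(w)$, as required. I expect the only delicate point to be the clean book-keeping of empty blocks $u_j=\mathbf 1$ against the genuine intervals $J^S_j$ — equivalently, verifying that the map "factorization $\mapsto(S,\text{interval words})$" becomes a bijection once empty blocks are absorbed by the normalization $g_0=1$; once this is pinned down, both equalities follow by comparing \eqref{deltaleft} and \eqref{deltaright} term by term.
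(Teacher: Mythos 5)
Your argument is correct and is essentially the paper's own proof: the paper disposes of this proposition in one line (``dualize \eqref{deltaleft} and \eqref{deltaright}, using that $\gamma$ is a character''), and your coefficient-of-$x_w$ computation, with the bijection between factorizations $w=i_1u_1\cdots i_ku_k$ (resp.\ $w=u\,i_1u_1\cdots i_ku_k$) and subsets $S\ni 1$ (resp.\ $S\not\ni 1$, $S\neq\varnothing$) of $[n]$, is exactly what that dualization amounts to once written out. The book-keeping of empty blocks via $g_{\mathbf 1}=1$ and the vanishing of the $\mathbf 1\otimes w$ term via $\phi(\mathbf 1)=0$ are handled correctly.
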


\begin{proof}
The statement follows by dualizing the formulas \eqref{deltaleft} and \eqref{deltaright}, using that $\gamma$ is a~character.
\end{proof}

Observe that when $\varepsilon_A+\phi \in{\mathcal G}(A)$, the decomposition of $\Lambda(\phi\ast\gamma)=\Lambda(\phi\prec\gamma+\phi\succ\gamma)$ reflects the splitting of the series in \eqref{thesum1} at the level of the sum over words $u$ as
 \begin{gather*}
g(x) + \sum_{\substack{u,u_1,\dotsc,u_k\in \{\1\} \cup \N^* \\v=i_1\dotsm i_k \in \N^*}}
 	f_vg_ug_{u_1}\dotsm g_{u_k}x_{u}x_{i_1}x_{u_1}\dotsm x_{i_k}x_{u_k}\\
	\qquad =1+ \sum_{\substack{u_1,\dotsc,u_k\in \{\1\} \cup \N^* \\v
	=i_1\dotsm i_k\in \N^*}}f_vg_{u_1}\dotsm g_{u_k}x_{i_1}x_{u_1}\dotsm x_{i_k}x_{u_k} \\
	\qquad \quad + \sum_{\substack{u_1,\dotsc,u_k \in \{\1\} \cup \N^* \\{\substack{u \in \N^* \\v=i_1\dotsm i_k \in \{\1\} \cup \N^* }}}}
	 f_vg_ug_{u_1}\dotsm g_{u_k}x_{u}x_{i_1}x_{u_1}\dotsm x_{i_k}x_{u_k}.
\end{gather*}
This splitting corresponds to the left and right half-shuffles in the non-commutative shuffle algebra $(\mathrm{Lin}(T(V),A),{\prec},{\succ})$.

We define now two binary operations mapping $G^1 \times G^0$ into $G^0$:
\[
	\big(g \curvearrowright f\big)(x):= f(xg(x))
	=\sum_{w\in \N^*} f_w(xg(x))_w \in G^0
\]
and
\[
	\big(g \curvearrowleft f\big)(x):= (g-1)f_{xg}(x)= (g(x)-1)f(xg(x)) =\sum_{\substack{w\in \N^* \\u \in \N^*}} f_wg_ux_u (xg(x))_w \in G^0.
\]
Let us consider two power series $f(x),g(x) \in G^1$, then the product \eqref{monoprod} can be written
\begin{equation*}
	f \bullet g=g(x)+\big(g \curvearrowright (f-1)\big)(x)+\big(g \curvearrowleft (f-1)\big)(x).
\end{equation*}
Hence, $\Lambda$ maps the group product $G^1$ to shifted composition whenever $\gamma$ is a character. On the other hand, the next statement may be considered as a linearized form the statement in Proposition~\ref{prop:gprod}.

\begin{Proposition}
\label{prop:lprod}
Let $\phi \in \mathrm{Lin}(T(V),A)$ and $\gamma \in {\mathcal L}(A)$. We set $f:=\Lambda(\phi) \in R$ and $g:=\Lambda_{\mathrm{Lie}}(\gamma) \in G^0$.
Then we have
\begin{equation*}
	\Lambda(\phi\prec\gamma)=\sum\limits_{j=1}^k\sum_{\substack{u\in \N^* \\v=i_1\dotsm i_k\in \N^*}}
	f_vg_ux_{i_1}\dotsm x_{i_j}x_ux_{i_{j+1}}\dotsm x_{i_k},
\end{equation*}
respectively
\begin{equation}\label{>1}
	\Lambda(\phi\succ\gamma)
	= \sum_{u,v \in \N^* } f_vg_ux_ux_v=g(x)f(x).
\end{equation}
In particular, going back to \eqref{eq:pre-Lie}, we find
\begin{equation*}
	\Lambda(\phi\prec\gamma +\phi\succ\gamma)=\Lambda(\phi\ast\gamma)=f\triangleleft g .
\end{equation*}
\end{Proposition}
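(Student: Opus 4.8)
The plan is to mirror the proof of Proposition~\ref{prop:gprod}, dualizing the two half-coproducts \eqref{deltaleft} and \eqref{deltaright}, but now exploiting that $\gamma\in\mathcal L(A)$ is an \emph{infinitesimal} character rather than a character: it vanishes on $\1$ and on every product $w_1|\dotsm|w_m$ with $m\ge 2$. This is precisely the feature that ``linearizes'' the earlier computation, reducing the full sum over subsets to a much smaller one.

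First I would evaluate $\phi\prec\gamma=m_A(\phi\otimes\gamma)\Delta_\prec$ on an arbitrary word $a_1\dotsm a_n$. In $\Delta_\prec(a_1\dotsm a_n)$ the term $a_1\dotsm a_n\otimes\1$ dies because $\gamma(\1)=0$, while in the remaining sum over $1\in S\subsetneq[n]$ the factor $\gamma(w_{J_1^S}|\dotsm|w_{J_m^S})$ vanishes unless $m=1$, i.e., unless the complement $S^c$ is a \emph{single} maximal interval. Writing $v:=w_S=i_1\dotsm i_k$ and $u:=w_{S^c}$, the condition $1\in S$ together with $S^c$ being one interval means that $u$ is inserted into a non-initial gap of $v$; as $S$ ranges over the admissible subsets this gap runs over positions $1,\dotsc,k$ (the end position $j=k$ included). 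Collecting coefficients $f_v=\phi(v)$ and $g_u=\gamma(u)$ and applying $\Lambda$ as in \eqref{bij} yields the claimed formula for $\Lambda(\phi\prec\gamma)$.

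The computation for $\phi\succ\gamma=m_A(\phi\otimes\gamma)\Delta_\succ$ is entirely analogous. The infinitesimal condition again forces the complement to be a single interval, and now $1\notin S$ forces that interval to be the initial segment, so $u$ sits entirely in front of $v$ and the surviving monomials are exactly $x_ux_v$; the leading term $\1\otimes a_1\dotsm a_n$ contributes the piece $\phi(\1)\,g(x)$, so that the total is $\sum_{u,v\in\N^*}f_vg_ux_ux_v$ plus this piece, which is precisely $g(x)f(x)$. For the ``in particular'' statement I would then invoke the splitting $\phi\ast\gamma=\phi\prec\gamma+\phi\succ\gamma$ from \eqref{convolprod}, so that $\Lambda(\phi\ast\gamma)=\Lambda(\phi\prec\gamma)+\Lambda(\phi\succ\gamma)$, and match this against the definition of $\triangleleft$ in \eqref{eq:pre-Lie}. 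The $\succ$ term supplies the insertion of $u$ at position $0$ (the concatenation $x_ux_v$), while the $\prec$ term supplies the insertions at positions $1,\dotsc,k$; together they realise the insertion of $u$ into \emph{all} $k+1$ gaps of $v$, which is exactly $f\triangleleft g$.

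The only genuine obstacle is the bookkeeping in the first two steps: one must argue cleanly that the infinitesimal-character hypothesis collapses the sum over all subsets $S$ (with their possibly many complementary intervals) to the subsets whose complement is one contiguous block, and then correctly identify the gap in which that block lands with the summation index $j$ (respectively with ``position $0$'' in the $\succ$ case). Once this reduction is in place, everything else is relabelling and a direct comparison with \eqref{eq:pre-Lie}.
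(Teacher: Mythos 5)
Your proposal is correct and follows exactly the route the paper takes: its proof is the one-line remark that the statement ``follows by dualizing the formulas \eqref{deltaleft} and \eqref{deltaright}, using that $\gamma$ is now an infinitesimal character,'' and your argument is precisely that dualization carried out in detail, with the vanishing of $\gamma$ on $\1$ and on products $w_1|\dotsm|w_m$, $m\ge 2$, collapsing the sum to subsets whose complement is a single interval. Your bookkeeping of the gap positions ($j=1,\dotsc,k$ for $\prec$, position $0$ for $\succ$) and of the constant-term contribution $\phi(\1)g(x)$ in \eqref{>1} is accurate and, if anything, slightly more careful than the paper's own display.
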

Hence, the pre-Lie product defined in Section~\ref{sec:pre-Lie} can be seen as the linearization of the group law of $G^1$.

\begin{proof}
The statement follows again by dualizing the formulas \eqref{deltaleft} and \eqref{deltaright}, using that $\gamma$ is now an infinitesimal character.
\end{proof}

\begin{Remark}
There is a general difficulty with series: the space is \emph{too small} to build consistently all shuffle operations on it. This is why we always have to carefully distinguish what happens in the group $G^1$ and the Lie algebra $G^0$. Defining operations that would make sense simultaneously on the two and would also fit with what happens in $\mathrm{Lin}(T(V),A)$ is impossible. The conclusion is precisely that the shuffle approach provides an unifying and elegant algebraic alternative to the generating series framework.
\end{Remark}

\section{Link with non-commutative probability}\label{sec:NCproba}

Let us consider now $(B,\varphi)$ a non-commutative probability space over the complex numbers. That is, $B$ is an associative unital algebra over $A:=\mathbb C$ and $\varphi$ a $\mathbb C$-valued unital linear form on~$B$~\cite{speichernica}. Let $(b_n)_{n\in\N}$ be a countable family of non-commutative random variables in $B$ (that is, of elements of $B$).

In the setting of Section~\ref{sec:Hopf}, we associate to these data the linear form $\phi\colon V\to\mathbb C$ defined for words $w=i_1\cdots i_k\in\N^*$ by
\begin{equation*}
	\phi(w):=\varphi(b_{i_1}\cdot_{\scriptscriptstyle{B}} \cdots \cdot_{\scriptscriptstyle{B}} b_{i_k}),
\end{equation*}
where $\cdot_{\scriptscriptstyle{B}}$ stands for the algebra product in $B$.
This linear form is further extended to a linear form $\Phi\colon T(V)\mapsto\mathbb C$ by
\begin{equation*}
	\Phi(w_1| \cdots |w_p):=\phi(w_1) \cdots \phi(w_p).
\end{equation*}
Notice that the linear form $\Phi \in \mathcal{G}(\mathbb C)$.

\begin{Remark}
All our results would of course hold for $R={\mathbb C}\langle\langle x_1,\ldots, x_n\rangle\rangle$ and a finite family $b_1,\ldots, b_n$ of elements of $B$. However, as handling the countable case does not present any extra difficulty, we state our results in that case and specialize them to the finite setting when appropriate.
\end{Remark}

Recalling \eqref{grell}, the series $\Lambda_{\mathrm{gr}}(\Phi)\in G^1\subset{\mathbb C}\langle\langle x_1,x_2,x_3,\dotsc\rangle\rangle$ is by definition the (multivariate) generating series of moments associated to $(b_n)_{n\in\N}$. For example, the coefficient of $x_1^n$ in $\Lambda_{\mathrm{gr}}(\Phi)$ is $\phi\big(b_1^n\big)$, the moment of order $n$ of the random variable $b_1 \in B$ in the sense of non-commutative probability. We will write $M(x_1)$ for the series in ${\mathbb C}\langle\langle x_1\rangle\rangle$ whose coefficients are the same on the~$x_1^n$ as those of $\Lambda_{\mathrm{gr}}(\Phi)$. This amounts to looking at the univariate case.

\subsection{Free probability}\label{ssec:fproba}

It was shown in \cite{ebrahimipatras_16,ebrahimipatras_17} (to which we refer for details) that the fixed point equation
\begin{equation}
	\label{leftshuffleSeries}
	\Phi = \varepsilon_{\mathbb C} + \kappa \prec \Phi
\end{equation}
defines an infinitesimal character $\kappa\in \mathcal{L}(\mathbb C)$ that corresponds to multivariate free cumulants. That is, in the language of the present article, $\Lambda_{\mathrm{Lie}}(\kappa)\in G^0$ is the multivariate generating series of free cumulants associated to $(b_n)_{n\in\N}$; again, recall \eqref{grell}.

Let us set from now on $\hat\mu :=\Lambda(\mu)\in{\mathbb C}\langle\langle x_1,x_2,x_3,\dotsc\rangle\rangle$ for $\mu\colon T(V)\to\mathbb C$ an arbitrary linear form, recall \eqref{bij}.

\begin{Proposition}\label{appli1}
We have the functional multivariate free moment-cumulant relation
\begin{equation}
	\label{freeMCrel}
	\widehat{\Phi}(x) = 1 + \widehat{\kappa}\big(x\, \widehat{\Phi}(x)\big).
\end{equation}
\end{Proposition}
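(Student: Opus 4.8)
The plan is to apply the generating-series map $\Lambda$ directly to the defining fixed point equation \eqref{leftshuffleSeries} and to read off each resulting term using the dictionary between half-shuffle products and series operations established in Proposition~\ref{prop:gprod}. Since $\Lambda$ is linear, applying it to $\Phi = \varepsilon_{\mathbb{C}} + \kappa \prec \Phi$ gives $\Lambda(\Phi) = \Lambda(\varepsilon_{\mathbb{C}}) + \Lambda(\kappa \prec \Phi)$, and it then remains only to identify the three terms separately.

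For the left-hand side, $\Lambda(\Phi) = \Lambda_{\mathrm{gr}}(\Phi) = \widehat{\Phi}$ by definition. For the first term on the right, the convolution unit $\varepsilon_{\mathbb{C}}$ takes the value $1$ on $\mathbf{1}$ and $0$ on every non-empty word, so by \eqref{bij} its generating series is $\Lambda(\varepsilon_{\mathbb{C}}) = 1$. For the half-shuffle term I would invoke Proposition~\ref{prop:gprod} with $\phi = \kappa$ and $\gamma = \Phi$: recalling that $\kappa \in \mathcal{L}(\mathbb{C})$ is an infinitesimal character and hence lies in $\mathrm{Lin}\big(T_+(V),\mathbb{C}\big)$, while $\Phi \in \mathcal{G}(\mathbb{C})$ is a character, the hypotheses of that proposition are met. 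Setting $f = \Lambda(\kappa) = \widehat{\kappa}$ and $g = \Lambda_{\mathrm{gr}}(\Phi) = \widehat{\Phi}$, formula \eqref{<1} yields $\Lambda(\kappa \prec \Phi) = \widehat{\kappa}\big(x\,\widehat{\Phi}(x)\big)$. Combining the three identifications produces exactly \eqref{freeMCrel}.

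There is essentially no obstacle beyond this bookkeeping, since the entire content is already packaged in Proposition~\ref{prop:gprod}: it asserts that the left half-shuffle $\prec$ against a character transports under $\Lambda$ to the shifted substitution $f \mapsto f(xg(x))$. The only genuine points to verify are that $\kappa$ indeed vanishes on $\mathbf{1}$, so that the shuffle product is the unextended one and Proposition~\ref{prop:gprod} applies directly, and that $\Lambda(\varepsilon_{\mathbb{C}}) = 1$; both are immediate from the definitions. I would therefore expect the proof itself to be a one-line computation once the hypotheses are recorded.
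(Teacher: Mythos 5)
Your proposal is correct and matches the paper's own (very terse) proof, which simply cites the fixed point equation \eqref{leftshuffleSeries} together with formula \eqref{<1}; you have just spelled out the same application of $\Lambda$ and Proposition~\ref{prop:gprod} in more detail. The hypothesis checks you record (that $\kappa$ vanishes on $\1$ so that $\kappa\in\mathrm{Lin}\big(T_+(V),\mathbb{C}\big)$, and that $\Lambda(\varepsilon_{\mathbb{C}})=1$) are exactly the right ones and are left implicit in the paper.
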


\begin{proof}
This follows from \eqref{leftshuffleSeries} and \eqref{<1}.
\end{proof}
Now
\[
	\widehat{\Phi}(x) = 1+\sum_{w\in\N^*} m(w)x_w,
	\qquad
	\widehat{\kappa}(x)=\sum_{w\in\N^*} \kappa(w)x_w,
\]
where $m(w)=\varphi(b_{i_1}\cdot_{\scriptscriptstyle{B}} \cdots \cdot_{\scriptscriptstyle{B}} b_{i_k})$ and \(\kappa(w)=k(b_{i_1},\ldots,b_{i_k})\) is the multivariate free cumulant, for the word $w={i_1}\cdots {i_k}$.
The functional multivariate free moment-cumulant relation then becomes
\begin{equation*}
	\widehat{\Phi}(x)
	=1+
	\sum_{k \ge 1}
	\sum_{\substack{u_1,\dotsc,u_k \in \N^*\\v =i_1\dotsm i_k \in \N^k}}
	\kappa(v)m({u_1})\dotsm m({u_k})x_{i_1}x_{u_1}\dotsm x_{i_k}x_{u_k}.
\end{equation*}
This statement implies the well-known free multivariate moment-cumu\-lant relations expressed in terms of non-crossing partitions \cite{speichernica}
\begin{equation*}
	m(w) = \sum_{\pi \in NC(|w|)} \prod_{\pi_i \in \pi} k(b_{i_1},\ldots,b_{i_n}|\pi_i).
\end{equation*}
Here $k(b_{i_1},\ldots,b_{i_n}|\pi_i):=k(b_{i_{j_1}}, \ldots, b_{i_{j_p}})$ for the block $\pi_i=\{j_1 < \cdots <j_p\} \subset [n]$ of the non-crossing partition $\pi \in NC(|w|)$.

\subsection{Boolean probability}\label{ssec:bproba}

The notation used in this section is the same as in the previous one. It was shown in \cite{ebrahimipatras_16,ebrahimipatras_17} (to which we refer for details) that the fixed point equation
\begin{equation}
\label{rshuboo}
	\Phi = \varepsilon_{\mathbb C} + \Phi \succ \beta,
\end{equation}
defines an infinitesimal character $\beta\in \mathcal{L}(\mathbb C)$ that corresponds to multivariate Boolean cumulants. That is, in the language of the present article, $\Lambda_{\mathrm{Lie}}(\beta)$ is the multivariate generating series of Boolean cumulants associated to $(b_n)_{n\in\N}$.

Applying $\Lambda_{\mathrm{gr}}$ to the identity \eqref{rshuboo} yields by \eqref{>1} the multivariate functional Boolean moment-cumulant relation
\begin{equation}
\label{fuboomc}
	\widehat{\Phi}(x) = 1 + \widehat{\beta}(x)\widehat{\Phi}(x).
\end{equation}
Note that the summation on the righthand side of \eqref{>1} simplifies drastically because $\beta \in \mathcal{L}(\mathbb C)$ linearizes the right half-coproduct \eqref{deltaright}. More explicitly, the Boolean moment-cumulant relation reads
\begin{equation*}
	\widehat{\Phi}(x) = 1+ \sum_{w} \sum_{\substack{w=uv \\u \neq w}} m(u)\beta(v) x_w.
\end{equation*}
Identity \eqref{fuboomc} rewrites
\begin{equation*}
	1-\widehat{\beta}(x)= \frac{1}{\widehat{\Phi}(x) }.
\end{equation*}

Let us exemplify how to relate such identities with computations in the group $\mathcal{G}(\mathbb C)$ of Hopf algebra characters.
Theorem~\ref{theorem:monotone} has rather interesting implications. Indeed, let $\Phi \in \mathcal{G}(\mathbb C)$ and consider the image of $\Phi * \Phi^{-1}=\varepsilon_{\mathbb C}$
\begin{equation*}
	\Lambda_{\mathrm{gr}}\big(\Phi * \Phi^{-1}\big)(x)
	= \Lambda_{\mathrm{gr}}(\Phi)\bullet \Lambda_{\mathrm{gr}}\big(\Phi^{-1}\big)(x)
	=\widehat{\Phi^{-1}}(x)\widehat{\Phi}\big(x{\widehat{\Phi^{-1}}}(x)\big).
\end{equation*}
This yields
\begin{equation*}
%\label{inverse1}
	\widehat{\Phi^{-1}}(x)\widehat{\Phi}\big(x{\widehat{\Phi^{-1}}}(x)\big)=1.
\end{equation*}
From $\Lambda_{\mathrm{gr}}(\Phi^{-1} * \Phi)(x) =1$, on the other hand, we obtain instead
\begin{equation*}
%\label{inverse2}
	 \widehat{\Phi}(x) \widehat{\Phi^{-1}}\big(x{\widehat{\Phi}}(x)\big)=1,
\end{equation*}
which implies that
\begin{equation}
\label{inverse3}
	\widehat{\Phi^{-1}}\big(x{\widehat{\Phi}}(x)\big)=\frac{1}{\widehat{\Phi}(x)}
\end{equation}
in the sense of generating series. In particular,

\begin{Proposition}\label{prop:dict}\quad
\begin{itemize}\itemsep=0pt
\item[$(i)$] For the multivariate generating series of free cumulants, $\widehat{\kappa}(x)$, we have
\begin{equation}
\label{inverse3x}
	\widehat{\Phi^{-1}}(x)=\frac{1}{1+ \widehat{\kappa}(x)}.
\end{equation}
\item[$(ii)$] For the multivariate generating series of Boolean cumulants, we have
\begin{equation}
\label{boolCMrel}
	\widehat\beta(x)=1-\widehat{\Phi^{-1}}\big(x{\widehat{\Phi}}(x)\big).
\end{equation}
\end{itemize}
\end{Proposition}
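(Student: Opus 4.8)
The plan is to obtain both identities by feeding the two moment--cumulant relations of the previous subsections into equation~\eqref{inverse3}, which already expresses $\widehat{\Phi^{-1}}$ evaluated at the shifted variables $x\widehat{\Phi}(x)$ in terms of $\widehat{\Phi}$.

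Part $(ii)$ is essentially immediate, so I would dispatch it first. The Boolean moment--cumulant relation~\eqref{fuboomc} has already been rewritten as $1-\widehat{\beta}(x)=1/\widehat{\Phi}(x)$, while \eqref{inverse3} reads $\widehat{\Phi^{-1}}\big(x\widehat{\Phi}(x)\big)=1/\widehat{\Phi}(x)$. Comparing the two right-hand sides gives $\widehat{\Phi^{-1}}\big(x\widehat{\Phi}(x)\big)=1-\widehat{\beta}(x)$, which is exactly \eqref{boolCMrel}. No cancellation of substitutions is required here.

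For part $(i)$ I would again start from \eqref{inverse3}, $\widehat{\Phi^{-1}}\big(x\widehat{\Phi}(x)\big)=1/\widehat{\Phi}(x)$, and substitute the free moment--cumulant relation~\eqref{freeMCrel}, namely $\widehat{\Phi}(x)=1+\widehat{\kappa}\big(x\widehat{\Phi}(x)\big)$, into the denominator on the right, obtaining
\[
\widehat{\Phi^{-1}}\big(x\widehat{\Phi}(x)\big)=\frac{1}{1+\widehat{\kappa}\big(x\widehat{\Phi}(x)\big)}.
\]
The key observation is then that the shifted substitution $\sigma\colon f\mapsto f\big(x\widehat{\Phi}(x)\big)$ is a ring homomorphism for the Cauchy product: since $(x\widehat{\Phi}(x))_{uv}=(x\widehat{\Phi}(x))_u(x\widehat{\Phi}(x))_v$, substituting $x_i\mapsto x_i\widehat{\Phi}(x)$ commutes with the Cauchy product, hence with Cauchy inversion. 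Consequently the right-hand side equals $\big(\frac{1}{1+\widehat{\kappa}}\big)\big(x\widehat{\Phi}(x)\big)$, so $\widehat{\Phi^{-1}}$ and $\frac{1}{1+\widehat{\kappa}}$ agree after the shifted substitution by $\widehat{\Phi}\in G^1$.

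It remains to strip off the substitution $\sigma$, and this is the only genuinely non-routine step; I expect it to be the main obstacle, since one must be careful that it interacts with the \emph{Cauchy} inverse and not with the shifted-composition product. Because $\widehat{\Phi}\in G^1$, writing $\widehat{\Phi}=1+\widehat{\Phi}'$ gives $x_i\widehat{\Phi}(x)=x_i+(\text{higher order})$, so for a word $v$ of length $k$ the lowest-degree term of $(x\widehat{\Phi}(x))_v$ is $x_v$; thus $\sigma$ is triangular with respect to the grading by word length, acts as the identity on the associated graded, and is therefore injective. Equivalently, $\sigma$ is invertible because $\widehat{\Phi}$ has a $\bullet$-inverse $\widehat{\Phi^{-1}}$, and composition of shifted substitutions mirrors the group law $\bullet$. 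Applying $\sigma^{-1}$ to $\sigma\big(\widehat{\Phi^{-1}}\big)=\sigma\big(\frac{1}{1+\widehat{\kappa}}\big)$ yields $\widehat{\Phi^{-1}}(x)=\frac{1}{1+\widehat{\kappa}(x)}$, which is \eqref{inverse3x}, completing the proof.
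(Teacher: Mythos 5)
Your proof is correct and follows essentially the same route as the paper: part $(ii)$ by comparing \eqref{inverse3} with the rewritten Boolean relation $1-\widehat\beta=1/\widehat\Phi$, and part $(i)$ by inserting \eqref{freeMCrel} into \eqref{inverse3} and then undoing the substitution $x\mapsto x\widehat\Phi(x)$ (the paper's ``composition with the compositional inverse $\big(x\widehat{\Phi}(x)\big)^{\langle -1\rangle}$''). The only difference is that you make explicit two facts the paper leaves tacit --- that the shifted substitution is a unital ring homomorphism for the Cauchy product, hence commutes with Cauchy inversion, and that it is invertible --- which is a welcome addition rather than a deviation.
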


\begin{proof}
Identity \eqref{inverse3x} follows from \eqref{inverse3} and \eqref{freeMCrel} upon composition with the compositional inverse, $\big(x\widehat{\Phi}(x)\big)^{\langle -1\rangle}$. We underline that \eqref{inverse3x} expresses the inverse of the Hopf algebra character~$\Phi \in \mathcal{G}(\mathbb C)$ in the group $G^1$. Identity \eqref{boolCMrel} is a consequence of \eqref{inverse3}.
\end{proof}

\subsection{Monotone probability}\label{ssec:monotoneproba}

It was shown in \cite{ebrahimipatras_17} (to which we refer for details) that
\begin{equation*}
	%\label{monocumu}
	\Phi=\exp^*(\rho)
\end{equation*}
defines an infinitesimal character $\rho\in \mathcal{L}(\mathbb C)$ that corresponds to multivariate monotone cumulants. That is, in the language of the present article, $\Lambda_{\mathrm{Lie}}(\rho)$ is the multivariate generating series of monotone cumulants associated to $(b_n)_{n\in\N}$.

Now, introduce a formal parameter \(t\). Define \(\Phi_t:=\exp^*(t\rho)\), and observe that it defines a~1-parameter semigroup, since \(\Phi_t *\Phi_s=\Phi_{t+s}\) and \(\Phi_0=\varepsilon_{\mathbb C}\). Formally taking a~derivative we arrive at the equation
\[
	\dot\Phi_t=\rho\ast\Phi_t=\Phi_t*\rho.
\]
Using \(\Lambda_{\mathrm{gr}}\) and defining \(M_t:=\Lambda_{\mathrm{gr}}(\Phi_t) \in G^1$, $h=\Lambda_{\mathrm{Lie}}(\rho) \in G^0\), we arrive by
using Propositions~\ref{prop:gprod} and~\ref{prop:lprod} at the equations
\[
	\dot M_t(x)=M_t(x)h(xM_t(x))=h(x)+((M_t-1)\triangleleft h)(x).
\]
The first equation is present in \cite[Theorem 6.3]{HS2011a} and \cite[equation~(4.10)]{AHLV2015}. The second equation leads to the expansion:
\begin{equation}
	\label{eq:Mtoh}
	M_t
	=1+th+(h\triangleleft h)\frac{t^2}{2}+((h\triangleleft h)\triangleleft h)\frac{t^3}{6}+\dotsb
	= 1+\sum_{n=1}^\infty R^{(n-1)}_{\triangleleft h}(h)\frac{t^n}{n!}.
\end{equation}
Here, $R^{(n)}_{\triangleleft h}(h):= \big(R^{(n-1)}_{\triangleleft h}(h)\big)\triangleleft h$ with $R^{(0)}_{\triangleleft h}(h):=h$.
Consider now for simplicity the univariate case and expand \(M_t\) as a power series in \(x\), i.e.,
\[
	M_t=1+\sum_{n=1}^\infty m_n(t)x^n.
\]
We can perform some explicit computations using equation~\eqref{eq:prelie.x}: if \(\widehat{\rho}(x)=:h(x)=\sum_{n\ge 1} h_nx^n\) is the generating series of the monotone cumulants, then
\begin{align*}
	R^{(n-1)}_{\triangleleft h}(h)&=\sum_{k=n}^\infty\bigg( \sum_{i_1+\dotsm+i_n=k}(i_1+1)(i_1+i_2+1)\dotsm (i_1+\dotsb+i_{n-1}+1)h_{i_1}\dotsm h_{i_n}\bigg)x^k.
\end{align*}
Therefore, by matching terms in equation~\eqref{eq:Mtoh}, we see that
\[
m_n(t)=\sum_{k=1}^n\sum_{i_1+\dotsb+i_k=n}(i_1+1)\dotsm(i_1+\dotsb+i_{k-1}+1)h_{i_1}\dotsm h_{i_k}\frac{t^k}{k!}.
\]
In low degrees:
\begin{align*}
	&m_1(t)= h_1t,\\
	&m_2(t)= h_2t+h_1^2t^2,\\
	&m_3(t)= h_3t+5h_1h_2\frac{t^2}{2}+h_1^3t^3,\\
	&m_4(t)= h_4t+\left( 3h_1h_3+\frac{3}{2}h_2^2 \right)t^2+\frac{13}{3}h_1^2h_2t^3+h_1^4t^4.
\end{align*}
For \(t=1\) the above formula coincides with \cite[equation~(6.9)]{hasebesaigo_11}, see also \cite[Theorem~2]{ebrahimipatras_17}.

\begin{Remark}
\label{rmk:preLieExp}
As $\sum_{n=1}^\infty R^{(n-1)}_{\triangleleft h}(h)\frac{1}{n!}$ is, by definition, the image of $h$ under the Agrachev--Gamkrelidze operator or ``pre-Lie exponential'' of $h$ (see \cite[Section 6.6]{CP} for details), one can formally lift its computation to the free pre-Lie algebra over a generator $\Forest{[]}$. Using the Chapoton--Livernet basis of non-planar rooted trees for the latter, the coefficient of a tree $\tau$ in the expansion of the pre-Lie exponential of $\bullet$ is known to be the corresponding Connes--Moscovici coefficient~$cm(\tau)$. See \cite{Brouder00} for details and also for an explanation of the terminology and notation.
We obtain
\begin{align*}
&	M_t	= \exp^{\triangleleft}(th)
	=1+ \sum_{\tau\in\mathcal T}\operatorname{cm}(\tau)P_h(\tau)\frac{t^{|\tau|}}{|\tau|!}
		 =1+ \sum_{\tau\in\mathcal T}\frac{1}{\tau!\sigma(\tau)}P_h(\tau)t^{|\tau|},
\end{align*}
where \(P_h\colon \mathcal T\to G^0\) is the unique pre-Lie morphism such that \(P_h(\Forest{[]})=h\).
For example
\[
	P_h\big(\Forest{[[]]}\big)=h\triangleleft h,
	\qquad\
	P_h\big(\Forest{[[][]]}\big)=(h\triangleleft h)\triangleleft h-h\triangleleft(h\triangleleft h).
\]
Recall that $h=h(x)=\sum_{n\ge 1} h_nx^n$. Here, $\tau!$ and $\sigma(\tau)$ are respectively the so-called tree factorial and the symmetry factor of the rooted tree $\tau \in\mathcal T$, both are defined inductively. We refer to \cite{Brouder00} for details.
\end{Remark}

\section{Conclusion}
\label{ssec:conclusion}

In this paper, we have established a dictionary between the shuffle Hopf algebra formulation of moment-cumulant relations in non-commutative probability and the classical approach based on non-commutative formal power series (Theorem~\ref{theorem:monotone}). It is based on identifying a new left-linear group law on the set of non-commutative formal power series with unit constant term (Proposition~\ref{thm:monogroup}). We also identify a (right) pre-Lie law on the latter, which follows from right-linearization of the aforementioned group law. For example, the dictionary identifies the shuffle convolution inverse with the reciprocal of the unit-shifted $R$-transform (Proposition~\ref{prop:dict}). This is particularly interesting as we used the group-inverse of the moment character to construct Wick polynomials (see \cite{EFPTZ_18, EFPTZ2021} for details). The dictionary also permits to describe the monotone moment-cumulant relations as a pre-Lie exponential in non-commutative formal power series (Remark~\ref{rmk:preLieExp}). The dictionary shows that both the shuffle Hopf algebra and the formal power series approaches are tightly related. The former, however, seems to add new perspectives in the understanding of computational and conceptional aspects in the combinatorial approach to non-commutative probability theory.

\subsection*{Acknowledgements} This work was partially supported by the project ``Pure Mathematics in Norway'', funded by the Trond Mohn Foundation and the Troms{\o} Research Foundation. KEF was supported by the Research Council of Norway through project 302831 ``Computational Dynamics and Stochastics on Manifolds'' (CODYSMA). NT was funded by the Deutsche Forschungsgemeinschaft (DFG, German Research Foundation) under Germany's Excellence Strategy – The Berlin Mathematics Research Center MATH+ (EXC-2046/1, project ID: 390685689). KEF and NT would also like to thank the Department of Mathematics at the Saarland University for warm hospitality during a sabbatical visit. FP acknowledges support from the European Research Council (ERC) under the European Union’s Horizon 2020 research and innovation program (Duall project, grant agreement No.~670624), from the ANR project Algebraic Combinatorics, Renormalization, Free probability and Operads – CARPLO (Project-ANR-20-CE40-0007) and from the ANR -- FWF project PAGCAP.

\pdfbookmark[1]{References}{ref}
\LastPageEnding

\end{document}